\documentclass[preprint,12pt]{elsarticle}
\journal{Discrete Applied Mathematics}
\usepackage[margin=1in]{geometry}
\newcommand{\lcom}{\underline{\ast}}
\usepackage{amsmath,amssymb,amsthm,mathtools}
\usepackage{bm}
\usepackage{tikz-cd}
\usepackage{forest}
\usepackage{multicol}

\usepackage{graphicx}
\usepackage{float}
\usepackage{booktabs}
\usepackage{enumitem}
\usepackage{xcolor}
\usepackage{microtype}
\usepackage{indentfirst}
\usepackage[colorlinks=true,linkcolor=blue,citecolor=blue,urlcolor=blue]{hyperref}
\usepackage[nameinlink,capitalise]{cleveref}

\theoremstyle{plain}
\newtheorem{theorem}{Theorem}
\newtheorem{lemma}[theorem]{Lemma}
\newtheorem{proposition}[theorem]{Proposition}

\theoremstyle{definition}
\newtheorem{definition}[theorem]{Definition}

\theoremstyle{remark}
\newtheorem{remark}[theorem]{Remark}

\date{}
\journal{Discrete Applied Mathematics}
\begin{document}
\begin{frontmatter}
\title{A Characterization of Trees with Trinomial Partial Petrial Polynomials}

\author[aff1]{Qingying Deng}
\ead{qingying@xtu.edu.cn}

\author[aff1]{Chaoyang Zhang\corref{cor1}}
\ead{202521511267@smail.xtu.edu.cn}

\cortext[cor1]{Corresponding author.}

\affiliation[aff1]{
  organization={School of Mathematics and Computational Science,
                Xiangtan University},
  city={Xiangtan~411105},
  state={Hunan},
  country={PR China}
}

\begin{abstract}
The partial Petrial polynomial of a bouquet can be computed from the
coranks over \(\mathrm{GF}(2)\) of matrices obtained by varying the
diagonal entries of the adjacency matrix of its intersection graph.
Motivated by this matrix formulation, we study the corresponding
polynomial for simple graphs and determine when it has exactly two or
three nonzero terms. A key tool is the interpolating property: the
exponents of the nonzero terms are consecutive. Using this property
together with local complementation minors of grafts, we extend the
known characterization of the binomial case from connected circle
graphs to all connected simple graphs, showing that such a graph has
a binomial partial Petrial polynomial if and only if it is a path.
Our main result characterizes the trinomial case for trees: a tree has
a trinomial partial Petrial polynomial if and only if it is a T-shape
tree or an H-shape tree. Here, a T-shape tree has maximum degree \(3\)
and exactly one vertex of degree \(3\), whereas an H-shape tree has
maximum degree \(3\) and exactly two vertices of degree \(3\), which
are adjacent. We also derive explicit formulas for both families in
terms of Jacobsthal numbers.
\end{abstract}

\begin{keyword}
partial Petrial polynomial, ribbon graph, local complementation minor, tree
\end{keyword}

\end{frontmatter}

\section{Introduction}
The Petrial operation was introduced by Wilson~\cite{Wilson1979Operators}.
For an embedded graph, it preserves the vertex and edge sets of the original
graph while replacing its facial walks with its Petrie polygons. In the
framework of ribbon graphs, the Petrial of a ribbon graph \(G\), denoted by
\(G^\times\), is obtained by inserting a half-twist into every edge band of
\(G\). More generally, for a subset \(A\subseteq E(G)\), the partial Petrial
\(G^{\times|A}\) is obtained by twisting precisely the edges in \(A\). Gross, Mansour, and Tucker~\cite{gross2021partial} introduced the partial Petrial polynomial as a generating function that enumerates the partial Petrials of a ribbon graph by Euler genus. They also established formulas and recursions for several families of ribbon graphs.

For a bouquet, the cyclic order of the loop ends around the unique vertex
determines its intersection graph, also called the unsigned interlace graph:
two loops correspond to adjacent vertices if their ends occur alternately
along the vertex boundary. Graphs arising as such intersection graphs are
precisely the circle graphs. Using this correspondence, Yan and
Li~\cite{YAN2025281} formulated the partial Petrial polynomial for circle
graphs and obtained explicit formulas for complete graphs and paths.

Feng, Yan, and Zheng~\cite{FENG2026411} characterized the binomial case for
connected circle graphs, proving that a connected circle graph has a partial
Petrial polynomial with exactly two nonzero terms if and only if it is a path.
Subsequently, Deng, Jin, and Yan~\cite{DengJinYan2026} extended the polynomial
to finite simple graphs by using an algebraic formulation in terms of coranks
over \(\mathrm{GF}(2)\), where the diagonal of the adjacency matrix is prescribed
by a chosen vertex subset. This formulation naturally leads to the question
whether the same binomial characterization remains valid for all connected
simple graphs. A structural ingredient needed for this question, and for the
trinomial classification below, is the interpolation property: the exponents
of its nonzero terms are consecutive. Using this property,
we first prove that the binomial characterization remains valid in the larger
setting: a connected simple graph has a partial Petrial polynomial with
exactly two nonzero terms if and only if it is a path.

Having settled the binomial case, we turn to the trinomial case. In this
paper, we study this problem for trees and give a complete answer: a tree has
a partial Petrial polynomial with exactly three nonzero terms precisely when
it is either a T-shape tree or an H-shape tree. Here, a T-shape tree is
obtained from one central vertex by attaching three pendant paths of positive
length, while an H-shape tree consists of two adjacent degree-\(3\) vertices,
each incident with two pendant paths of positive length. Precise definitions
are given in Section~\ref{sec:main-results}.

We also derive explicit formulas in
terms of the Jacobsthal numbers for the partial Petrial polynomials of \(T(a,b,c)\) and \(H(a,b,c,d)\), where \(a,b,c\) and \(a,b,c,d\) denote their respective branch lengths. These formulas show that all three coefficients are positive and
therefore establish the sufficiency part of the classification. The necessity part is proved by combining the adjacency-matrix formulation of
\(P_G^\times(z)\), local complementation minors of grafts, and a structural
reduction for trees. The key obstruction result shows that every tree which is
neither a path, nor a T-shape tree, nor an H-shape tree has a local
complementation minor containing at least three isolated vertices outside the
distinguished vertex subset. This forces the maximum corank of the adjacency
matrices associated with the corresponding grafts over \(\mathrm{GF}(2)\) to be
at least \(3\). Together with the interpolation property, this implies that
the polynomial has at least four nonzero terms, and hence rules out the
trinomial case.

\section{Preliminaries}
In this section, we recall the graph operations and algebraic
invariants used throughout the paper. Unless otherwise specified, all abstract graphs considered in this paper are finite, simple, and undirected.
We denote the path, complete graph, and star on \(n\) vertices by
\(P_n\), \(K_n\), and \(K_{1,n-1}\), respectively. All other graph-theoretic
notation follows standard conventions.

\begin{definition}[\cite{kotzig1968eulerian}]
Let \(G\) be a simple graph and \(v\in V(G)\). The \emph{local complementation}
at \(v\), denoted by \(G*v\), is the graph obtained from \(G\) by replacing
the induced subgraph on the neighborhood \(N_G(v)\) with its complement.
Equivalently, \(G*v\) is formed by toggling all adjacencies between vertices
in \(N_G(v)\). We further define
\[
G\underline{*}v := (G*v)\setminus\{v\}.
\]
\end{definition}

\begin{definition}
A \emph{graft} is a pair \((G,L_G)\), where \(G=(V,E)\) is a simple
graph with \(V=\{v_1,\ldots,v_n\}\) and \(L_G\subseteq V\). We write
\(\mathbf{A}_{(G,L_G)}=(a_{ij})_{n\times n}\) for its adjacency matrix over
\(\mathrm{GF}(2)\), where
\[
a_{ij}=
\begin{cases}
1, & \text{if } i\ne j \text{ and } v_iv_j\in E(G),\\
1, & \text{if } i=j \text{ and } v_i\in L_G,\\
0, & \text{otherwise}.
\end{cases}
\]
\end{definition}

The following definition adapts local complementation to grafts.

\begin{definition}[\cite{FENG2026411}]
Let \((G,L_G)\) be a graft. For a vertex \(v\in L_G\), the \emph{local
complementation} at \(v\) is the operation on the graft \((G,L_G)\) defined by
\[
(G,L_G)\mapsto (G*v,\,L_G\Delta N_G(v)).
\]
The \emph{local complementation deletion} at such a vertex \(v\) is the
operation
\[
(G,L_G)\underline{*}v
:=
\bigl(G\underline{*}v,\,(L_G\setminus\{v\})\Delta N_G(v)\bigr).
\]
A graft \((H,L_H)\) is a \emph{local complementation minor} of
\((G,L_G)\) if it can be obtained from \((G,L_G)\) by a sequence of local
complementation deletion operations.
\end{definition}

\begin{proposition}[\cite{FENG2026411}]
\label{prop:minor-corank}
Let \((G,L_G)\) and \((H,L_H)\) be grafts. If \((H,L_H)\) is a local
complementation minor of \((G,L_G)\), then
\[
\operatorname{corank}(\mathbf A_{(G,L_G)})
=
\operatorname{corank}(\mathbf A_{(H,L_H)}).
\]
\end{proposition}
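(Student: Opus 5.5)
The plan is to reduce to a single local complementation deletion and then identify the new adjacency matrix as a Schur complement over \(\mathrm{GF}(2)\). Since a local complementation minor is obtained by a finite sequence of local complementation deletions, and equality of coranks is transitive, it suffices to treat the case \((H,L_H)=(G,L_G)\underline{*}v\) for a single vertex \(v\in L_G\). Induction on the number of deletions then gives the general statement.

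For the single step, order the vertices so that \(v=v_1\), and write
\[
\mathbf A_{(G,L_G)}=\begin{pmatrix}1 & \mathbf b^{T}\\ \mathbf b & \mathbf C\end{pmatrix}.
\]
Here the top-left entry is \(1\) because \(v\in L_G\), \(\mathbf b\in\mathrm{GF}(2)^{n-1}\) is the indicator vector of \(N_G(v)\), and \(\mathbf C=\mathbf A_{(G-v,\,L_G\setminus\{v\})}\). The pivot is \(1\), so we may eliminate the first row and column. Over \(\mathrm{GF}(2)\) the elimination is
\[
\begin{pmatrix}1&0\\ \mathbf b&I\end{pmatrix}
\begin{pmatrix}1&0\\ 0&\mathbf C+\mathbf b\mathbf b^{T}\end{pmatrix}
\begin{pmatrix}1&\mathbf b^{T}\\ 0&I\end{pmatrix}
=\mathbf A_{(G,L_G)}.
\]
Hence \(\operatorname{rank}\mathbf A_{(G,L_G)}=1+\operatorname{rank}(\mathbf C+\mathbf b\mathbf b^{T})\). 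The first matrix has size \(n\) and the Schur complement has size \(n-1\), so
\[
\operatorname{corank}\mathbf A_{(G,L_G)}=\operatorname{corank}(\mathbf C+\mathbf b\mathbf b^{T}).
\]

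The remaining, and only substantive, step is to check that \(\mathbf C+\mathbf b\mathbf b^{T}=\mathbf A_{(G\underline{*}v,\;(L_G\setminus\{v\})\Delta N_G(v))}\). The \((u,w)\) entry of \(\mathbf b\mathbf b^{T}\) is \(1\) exactly when \(u,w\in N_G(v)\).

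\emph{Off-diagonal entries} (\(u\neq w\)): adding \(\mathbf b\mathbf b^{T}\) toggles adjacency precisely for pairs inside \(N_G(v)\). This is the definition of \(G*v\), restricted to \(V\setminus\{v\}\), which is exactly \(G\underline{*}v\).

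\emph{Diagonal entries} (\(u=w\)): the entry is toggled precisely for \(u\in N_G(v)\). This changes the loop set from \(L_G\setminus\{v\}\) to \((L_G\setminus\{v\})\Delta N_G(v)\).

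Both descriptions match the graft definition of \(\underline{*}\), so the two matrices coincide. I expect no real obstacle here; the only care needed is the bookkeeping that \(-\mathbf b\mathbf b^{T}=\mathbf b\mathbf b^{T}\) in characteristic \(2\), and that the diagonal toggling reproduces the symmetric difference with \(N_G(v)\) rather than with the closed neighbourhood (\(v\) itself has been removed).
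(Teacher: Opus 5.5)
Your proof is correct and complete. The paper gives no proof of this proposition; it is quoted from Feng, Yan and Zheng, so there is no in-paper argument to compare yours against.

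Your approach works as follows. You reduce to a single deletion at \(v\in L_G\) and pivot on the diagonal entry \(a_{vv}=1\) using the unitriangular factorization. You then check entrywise that the Schur complement \(\mathbf C+\mathbf b\mathbf b^{T}\) equals \(\mathbf A_{(G\underline{*}v,\,(L_G\setminus\{v\})\Delta N_G(v))}\). This is the standard pivoting argument, and it is purely linear-algebraic, so it holds for arbitrary grafts, not only circle graphs. That is exactly the generality the paper needs when it applies the proposition to possibly non-circle graphs in the proof of Theorem~\ref{thm:simple-graph-path}.

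You also use the hypothesis \(v\in L_G\) exactly where it is needed, namely to get a nonzero pivot. Without it the statement fails. For instance, if the operation were allowed at an isolated vertex \(v\notin L_G\), the matrix would have a zero row and column at \(v\), and deleting \(v\) would lower the corank by one.
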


\begin{theorem}[\upshape\cite{FENG2026411}]
\label{thm:not-path}
Let \(G\) be a simple graph that is not a path. Then there exists a subset
\(L_G\subseteq V(G)\) such that the graft \((G,L_G)\) contains a local
complementation minor \((G',L_{G'})\), where \(G'\) has at least two isolated
vertices, neither of which belongs to \(L_{G'}\).
\end{theorem}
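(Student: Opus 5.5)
Since \Cref{thm:not-path} is quoted from \cite{FENG2026411}, the task here is to reconstruct a proof that works for an arbitrary simple graph \(G\) that is not a path. The plan is to reduce to a small family of forbidden induced subgraphs. The key observation is this. If \(H\) is an induced subgraph of \(G\), we take \(L_G\supseteq V(G)\setminus V(H)\) and process the vertices outside \(H\) one at a time by local complementation deletion. Each such deletion, however, toggles adjacencies and membership in \(L\) among the current neighbours of the deleted vertex. So we cannot simply delete the extra vertices. We instead have to choose the order and the set \(L_G\) so that what survives is a graft we control. The cleaner route is to work with minimal configurations.

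A connected graph that is not a path has a vertex of degree at least \(3\) or contains a cycle. A disconnected graph that is not a path has at least two components, and we handle it the same way componentwise. For a single component, reading \(L\) by the convention in the statement, an isolated vertex \(u\notin L\) already contributes one to the corank. Hence it suffices to produce one suitable isolated vertex in each of two components. A single vertex component is itself isolated. A component that is a path \(P_k\) with \(k\ge 2\) can be reduced to one isolated vertex not in \(L\): put an end vertex in \(L\), delete it, and propagate along the path. So the disconnected case reduces to showing that every connected graph with at least two vertices yields at least one such isolated vertex. This follows by repeatedly deleting a vertex in \(L\) chosen at a leaf of a spanning tree.

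For the connected case, I would argue by induction on \(|V(G)|\). Choose a non-cut vertex \(w\) such that \(G-w\) is still not a path, whenever one exists. Then put \(w\in L_G\) and apply \(\underline{*}w\). The resulting graph \(G\underline{*}w\) is \(G-w\) with its neighbourhood complemented, and we want it still to be a non-path, so that induction applies. The sets \(L\) transform by the symmetric difference with \(N(w)\), and the induction hypothesis lets us choose \(L\) on the smaller graph freely. We adjust by choosing \(L_G\) on \(N(w)\) so that after the toggling we obtain the set the induction requires. This adjustment is possible because the diagonal entries of \(N(w)\) in \(L_G\) are free.

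The main obstacle is ensuring that local complementation at \(w\) does not turn the graph into a path, and handling the base cases. These are \(K_{1,3}\), the cycles \(C_n\) with \(n\ge 3\), and small graphs where every choice of \(w\) fails. For the base cases I would verify directly. For \(K_{1,3}\) with centre \(c\in L\), deleting \(c\) gives a triangle on the leaves, and a suitable \(L\) then reduces it to two isolated vertices outside \(L\). For \(C_n\), one deletion gives \(P_{n-1}\) with a chord pattern, and we finish similarly. For the inductive step, if every admissible \(w\) makes \(G\underline{*}w\) a path, I would instead pick \(w\) of degree \(1\), or of degree \(2\) inside a cycle. For such \(w\) the complementation toggles at most one edge, so the non-path structure (a degree-\(3\) vertex or a cycle) can be tracked explicitly.
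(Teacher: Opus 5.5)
The paper only cites this theorem from \cite{FENG2026411} and gives no proof of it, so there is nothing in the paper to compare against. Judged on its own, your proposal has a genuine gap.

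\textbf{What is sound.} Your reduction step is correct. The toggles made by $\underline{*}w$ depend only on the underlying graph. So any sequence of local complementation deletions can be made legal by choosing $L_G$ afterwards, as in Lemma~\ref{lem:leaf-deletion-lifting}, and the surviving vertices can be placed outside the final $L$. The disconnected case is also fine, because components never merge under $\underline{*}$. The base cases $K_{1,3}$ and $C_n$ are correct: $C_n\underline{*}w=C_{n-1}$ for $n\ge 4$, and $C_3\underline{*}w=\overline{K_2}$.

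\textbf{The gap: the inductive step.} This step is the whole content of the theorem, and your selection rule is the wrong criterion. Whether $G-w$ is a path says nothing about $G\underline{*}w$. A bad choice is fatal, because $P_k\underline{*}w$ is always a path. Once a path appears, no graft minor with two isolated vertices can follow.

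The diamond $K_4-e$ shows that your procedure fails. Deleting either degree-$3$ vertex leaves $P_3$, so under your rule the admissible vertices are exactly the two degree-$2$ vertices. For each of these, the two neighbours are adjacent, so the toggle removes their edge and $G\underline{*}w\cong P_3$. Your fallback (degree $1$, or degree $2$ on a cycle) picks exactly the same two vertices. The only good move is at a degree-$3$ vertex $a$, where $G\underline{*}a\cong K_2\cup K_1$, and your rule excludes it. Since you never identify the graphs on which ``every choice fails,'' the argument does not close.

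\textbf{The missing lemma.} You need the following: every non-path $G$ on $n\ge 3$ vertices has some $w$ such that $G\underline{*}w$ is not a path. Iterating this down to $n=2$ must end at $\overline{K_2}$, and $L_G$ is then chosen at the end. The lemma has a short proof.
\begin{enumerate}
\item If $G$ is disconnected, take $w$ in a component with at least two vertices.
\item If $G$ is connected, suppose every $G\underline{*}w$ is a path.
\item A leaf $x$ makes $G-x$ a path, which forces $G=K_{1,3}$. But $K_{1,3}\underline{*}c=K_3$, so in fact $\delta(G)\ge 2$.
\item Cycles are excluded by the base case above, so $\Delta(G)\ge 3$.
\item Key observation: a vertex $u\notin N[w]=N(w)\cup\{w\}$ keeps its degree in $G\underline{*}w$. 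Hence every vertex of degree at least $3$ lies in $N[w]$ for every $w$, that is, it is dominating.
\item For a dominating $v$ we have $G\underline{*}v=\overline{G-v}$, so $G-v\cong\overline{P_{n-1}}$.
\item For $n=4$ this makes $G$ the paw, which has a leaf, a contradiction.
\item For $n\ge 5$, an end vertex of $P_{n-1}$ has degree $n-2\ge 3$ in $G$ but is not dominating, again a contradiction.
\end{enumerate}
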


We next recall the connection between bouquets and circle graphs. A \emph{bouquet} is a ribbon graph with exactly one vertex. For a bouquet \(B\), its \emph{intersection graph} \(I(B)\) is the simple graph whose vertices are the edges of \(B\), in which two vertices are adjacent precisely when the corresponding loop ends occur alternately around the unique vertex of \(B\). A graph is called a \emph{circle graph} if it is isomorphic to \(I(B)\) for some orientable bouquet \(B\).

The partial Petrial polynomial of a ribbon graph \(G\), introduced in~\cite{gross2021partial}, is defined by
\[
{}^\partial\varepsilon^\times_G(z)
:=
\sum_{A\subseteq E(G)}z^{\varepsilon(G^{\times|A})},
\]
where \(\varepsilon(G^{\times|A})\) is the Euler genus of
\(G^{\times|A}\).

\begin{definition}[\cite{YAN2025281}]
\label{def:circle-graph-partial-petrial-polynomial}
Let \(G\) be a circle graph. The partial Petrial polynomial of \(G\), denoted
by \(P_G^\times(z)\), is defined by
\[
P_G^\times(z)
:=
{}^\partial\varepsilon^\times_B(z),
\]
where \(B\) is an orientable bouquet whose intersection graph is isomorphic to
\(G\).
\end{definition}

\begin{lemma}[\cite{Mellor2003FewWeightSystems}]
\label{lem:faces-corank}
Let \(I(B)\) be the intersection graph of a bouquet \(B\), and let
\(S\subseteq V(I(B))\) correspond to the non-orientable loops of \(B\). Then
\[
f(B)
=
\operatorname{corank}(\mathbf A_{(I(B),S)})+1,
\]
where \(f(B)\) denotes the number of faces of \(B\).
\end{lemma}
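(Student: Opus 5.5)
The plan is to prove the identity topologically, by computing the rank of the mod-\(2\) intersection form of the surface underlying \(B\). Let \(\Sigma\) be the surface with boundary obtained by regarding the ribbon graph \(B\) as a surface: one vertex disc \(D\) together with \(n\) edge bands. Its boundary \(\partial\Sigma\) consists of exactly \(f(B)\) circles. For each loop \(e_i\), let \(\gamma_i\) be the closed curve that runs along the core of the band \(e_i\) and closes up through \(D\). Since \(\Sigma\) deformation retracts onto the bouquet of these circles, the classes \([\gamma_1],\dots,[\gamma_n]\) form a basis of \(H_1(\Sigma;\mathrm{GF}(2))\cong \mathrm{GF}(2)^n\).

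\textbf{Step 1: the Gram matrix is \(\mathbf A_{(I(B),S)}\).} I will show that the mod-\(2\) intersection form \(\langle\cdot,\cdot\rangle\) on \(H_1(\Sigma;\mathrm{GF}(2))\) has Gram matrix \(\mathbf A_{(I(B),S)}\) in this basis. The form is well defined and symmetric over \(\mathrm{GF}(2)\) even on a nonorientable surface.
\begin{itemize}
\item \emph{Off-diagonal entries.} For \(i\neq j\), the curves \(\gamma_i\) and \(\gamma_j\) meet only inside \(D\), where they are two chords. If the ends of \(e_i\) and \(e_j\) alternate around the vertex, the chords must cross, and can be arranged to cross transversally exactly once. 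Otherwise the chords can be drawn disjointly. Hence \(\langle\gamma_i,\gamma_j\rangle=1\) exactly when \(v_iv_j\in E(I(B))\).
\item \emph{Diagonal entries.} The self-intersection \(\langle\gamma_i,\gamma_i\rangle\) is the mod-\(2\) count of intersections of \(\gamma_i\) with a transverse push-off of itself. A regular neighbourhood of \(\gamma_i\) is an annulus if \(e_i\) is untwisted and a Möbius band if \(e_i\) is twisted. In the annulus case the push-off is disjoint from \(\gamma_i\); in the Möbius band case it crosses \(\gamma_i\) once. So the diagonal entry is \(1\) exactly when \(v_i\in S\).
\end{itemize}

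\textbf{Step 2: the radical has dimension \(f(B)-1\).} The intersection pairing corresponds, via Lefschetz duality \(H_1(\Sigma,\partial\Sigma)\cong H^1(\Sigma)\) with \(\mathrm{GF}(2)\) coefficients (which requires no orientability), to the map \(j_*\colon H_1(\Sigma)\to H_1(\Sigma,\partial\Sigma)\). Therefore the radical of the form, i.e. the kernel of the Gram matrix, equals \(\ker j_*\). By the long exact sequence of the pair \((\Sigma,\partial\Sigma)\), we have \(\ker j_*=\operatorname{im}(H_1(\partial\Sigma)\to H_1(\Sigma))\). In the same sequence, \(H_2(\Sigma)=0\) because \(\Sigma\) retracts to a graph, and \(H_2(\Sigma,\partial\Sigma)\cong\mathrm{GF}(2)\) because \(\Sigma\) is connected. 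This \(\mathrm{GF}(2)\) injects into \(H_1(\partial\Sigma)\cong\mathrm{GF}(2)^{f(B)}\), with image spanned by the sum of all boundary circles. Hence
\[
\operatorname{corank}(\mathbf A_{(I(B),S)})=\dim\ker j_*=f(B)-1,
\]
which is the claim.

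\textbf{Main obstacle.} The hard step is the diagonal computation in Step 1, together with making sure the duality argument genuinely holds over \(\mathrm{GF}(2)\) for nonorientable \(\Sigma\). If that feels delicate, I would fall back on a combinatorial proof by induction on \(n\). The base case is \(n=0\), where \(f=1\) and the corank is \(0\). For the inductive step, I would trace how deleting an untwisted isolated loop, or deleting a twisted loop \(v\in S\), changes the face count, and compare the latter with the graft operation \((G,L_G)\underline{*}v\). The matrix identity underlying that comparison is Gaussian elimination on the pivot \(a_{vv}=1\), which is exactly the content of Proposition~\ref{prop:minor-corank}.
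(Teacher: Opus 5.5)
Your proof is correct, and it is a genuinely different route in the trivial sense that the paper offers no proof at all. The paper only quotes the lemma from Mellor and uses it to turn Euler genus into coranks, so there is no in-paper argument to compare with.

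Your argument is a standard topological one and it is complete.
\begin{itemize}
\item In the basis of loop cores, \(\mathbf A_{(I(B),S)}\) is the Gram matrix of the mod-\(2\) intersection form on \(H_1(\Sigma;\mathrm{GF}(2))\). The off-diagonal entries come from crossing chords in the vertex disc. The diagonal entries come from the fact that a one-sided curve has odd self-intersection.
\item \(\mathrm{GF}(2)\)-Lefschetz duality, together with the exact sequence of \((\Sigma,\partial\Sigma)\), identifies the radical with the image of \(H_1(\partial\Sigma)\). That image has dimension \(f(B)-1\) because \(H_2(\Sigma)=0\) and \(H_2(\Sigma,\partial\Sigma)\cong\mathrm{GF}(2)\).
\end{itemize}
None of these steps needs orientability. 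The argument also explains the \(+1\) conceptually, as the single relation among the boundary circles.

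A combinatorial proof, by induction on the number of loops via reductions on the adjacency matrix, would stay inside the graft language of the paper but needs case analysis. Your argument trades that for standard duality theory.

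Your fallback would not close as sketched, though you do not need it.
\begin{itemize}
\item The two reductions you list never apply when \(S=\emptyset\) and \(I(B)\) has no isolated vertices. An example is two interlaced untwisted loops: a punctured torus with \(f=1\), whose matrix has zero diagonal and ones off the diagonal. That case needs a pivot on an edge, i.e.\ a \(2\times 2\) Schur complement, which Proposition~\ref{prop:minor-corank} does not cover.
\item For \(v\in S\), the operation matching \((I(B),S)\lcom v\) is not deletion of \(v\). It is partial duality at \(v\) followed by deletion, giving \(f(B)=f(B^{\{v\}}-v)\). Plain deletion can change the face count: an untwisted loop interlaced with a twisted one has \(f=1\), while deleting the twisted loop leaves an annulus with \(f=2\).
\end{itemize}
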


The preceding lemma yields a matrix expression for the partial Petrial
polynomial of a circle graph. Indeed, if \(B\) is a bouquet with \(n\)
edges, then
\[
\varepsilon(B)=n+1-f(B).
\]
Now suppose that \(B\) is orientable and that \(A\subseteq E(B)\).
The partial Petrial \(B^{\times|A}\) has the same intersection graph
as \(B\), and its non-orientable loops correspond precisely to the
edges in \(A\). Hence, identifying \(E(B)\) with \(V(I(B))\), we obtain gives
\[
\varepsilon(B^{\times|A})
=
n-\operatorname{corank}\mathbf{A}_{(I(B),A)}.
\]
Consequently, for a circle graph \(G\) on \(n\) vertices,
Definition~\ref{def:circle-graph-partial-petrial-polynomial} can equivalently be
written as
\[
P^\times_G(z)
=
\sum_{L_G\subseteq V(G)}
z^{\,n-\operatorname{corank}\mathbf{A}_{(G,L_G)}}.
\]

The matrix expression above extends naturally to arbitrary simple
graphs.

\begin{definition}[\cite{DengJinYan2026}]
\label{def:partial-petrial-polynomial}
Let \(G=(V,E)\) be a simple graph on \(n\) vertices. The
\emph{partial Petrial polynomial} of \(G\) is defined by
\[
P_G^\times(z)
:=
\sum_{L_G\subseteq V}
z^{n-\operatorname{corank}(\mathbf A_{(G,L_G)})},
\]
where
\[
\operatorname{corank}(\mathbf A_{(G,L_G)})
=
n-\operatorname{rank}(\mathbf A_{(G,L_G)})
\]
is the nullity of the matrix over \(\mathrm{GF}(2)\).
\end{definition}

For circle graphs, the matrix definition above agrees with the original
definition via orientable bouquets. For non-circle graphs, it provides the
graph-theoretic extension used throughout the remainder of the paper.

The following two lemmas record the degree and interpolating property of \(P_G^\times(z)\).

\begin{lemma}[\cite{DengJinYan2026}]
\label{lem:degree-n}
For any simple graph \(G=(V,E)\) on \(n\) vertices, there exists a subset
\(L_G\subseteq V\) such that \(\mathbf A_{(G,L_G)}\) is nonsingular over
\(\mathrm{GF}(2)\). Consequently, the degree of the partial Petrial
polynomial \(P_G^\times(z)\) is exactly \(n\).
\end{lemma}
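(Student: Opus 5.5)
We must show that for every simple graph \(G\) on \(n\) vertices some choice of diagonal \(L_G\) makes \(\mathbf A_{(G,L_G)}\) invertible over \(\mathrm{GF}(2)\). The degree claim then follows at once. Every exponent \(n-\operatorname{corank}\) is at most \(n\), and the term \(z^n\) occurs with positive coefficient (coefficients are counts of subsets, so no cancellation happens). The plan is to view \(\det(\mathbf A_{(G,L)})\) as a polynomial in the diagonal indicators and show it is nonzero, or more concretely to construct \(L\) by induction on \(n\).

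The induction runs as follows. For \(n=1\) take \(L=\{v\}\). For \(n\ge 2\), pick any vertex \(v\) and consider the symmetric matrix \(\mathbf A=\mathbf A_{(G,L)}\), written with \(v\) first:
\[
\mathbf A=\begin{pmatrix} d & b^{T}\\ b & M\end{pmatrix},
\]
where \(M\) is the matrix of the graft on \(G-v\) with diagonal \(L\setminus\{v\}\). By induction we can choose \(L'\subseteq V\setminus\{v\}\) with \(M\) invertible. Then \(\det\mathbf A=\det M\,(d-b^{T}M^{-1}b)\) by the Schur complement. Over \(\mathrm{GF}(2)\), \(b^{T}M^{-1}b\) is a fixed scalar \(c\in\{0,1\}\), and \(d\) is ours to choose. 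Setting \(d=1+c\), that is, putting \(v\) in \(L\) if and only if \(c=0\), makes the Schur complement \(1\), so \(\mathbf A\) is nonsingular with \(L=L'\cup\{v\}\) or \(L=L'\).

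An equivalent phrasing expands \(\det\mathbf A_{(G,L)}\) along the entry \(a_{vv}\). The determinant is affine in \(a_{vv}\) with coefficient \(\det M\), so once \(\det M=1\), exactly one value of \(a_{vv}\) gives determinant \(1\). This version avoids inverses and works over any field, since \(\det\) is linear in each diagonal entry.

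There is no serious obstacle. The one point to state carefully is that the exponent set is bounded by \(n\), since \(\operatorname{corank}\ge 0\). Together with the invertible choice this gives the top exponent \(n\) with coefficient at least \(1\), hence \(\deg P_G^\times(z)=n\).
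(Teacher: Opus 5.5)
Your proof is correct. The paper does not prove this lemma itself; it is quoted from Deng, Jin, and Yan, so there is no in-text argument to compare with. Your induction is therefore a complete, self-contained justification.

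The key points all hold:
\begin{enumerate}
\item deleting the row and column of \(v\) from \(\mathbf A_{(G,L)}\) yields exactly \(\mathbf A_{(G-v,L\setminus\{v\})}\);
\item neither \(b\) nor \(M\) depends on whether \(v\in L\);
\item \(\det\mathbf A_{(G,L)}=a_{vv}\det M+K\) with \(K\) independent of \(a_{vv}\), so once \(\det M=1\), exactly one of the two choices for \(v\) gives determinant \(1\).
\end{enumerate}
Unwound, this is a greedy rule. Along any vertex ordering, decide whether each \(v_k\) lies in \(L\) so that the \(k\)th leading principal minor equals \(1\).

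The non-constructive route you mention at the start also works, via the expansion
\[
\det\bigl(\mathbf A_{(G,\emptyset)}+\operatorname{diag}(x_1,\dots,x_n)\bigr)=\sum_{S\subseteq V}\det\bigl(\mathbf A_{(G,\emptyset)}[V\setminus S]\bigr)\prod_{v_i\in S}x_i ,
\]
where \([V\setminus S]\) denotes the principal submatrix on \(V\setminus S\). This is a multilinear polynomial with coefficient \(1\) on \(x_1\cdots x_n\), so it is not identically zero on \(\{0,1\}^n\). Your induction is the constructive version of this fact.

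Your treatment of the degree claim is exactly what is needed. Every exponent is at most \(n\) because coranks are nonnegative, and the coefficient of \(z^n\) is a positive count of subsets, so no cancellation can occur.
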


\begin{lemma}[\cite{DengJinYan2026}]
\label{lem:interpolating}
For any simple graph \(G\), the partial Petrial polynomial \(P_G^\times(z)\)
is an interpolating polynomial. That is, if \(P_G^\times(z)\) contains terms
\(z^k\) and \(z^m\) with \(k<m\), then it contains a term \(z^j\) with a
nonzero coefficient for every integer \(j\) such that \(k<j<m\).
\end{lemma}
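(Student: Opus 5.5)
The interpolation property concerns the set of attained coranks of \(\mathbf A_{(G,L)}\) as \(L\) ranges over subsets of \(V\). Since \(P_G^\times(z)=\sum_L z^{n-\operatorname{corank}}\), it suffices to show that this set is an interval of integers. Let \(k<m\) be attained values and \(k<j<m\). I will show that every intermediate value is attained by moving between subsets one vertex at a time.

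The key observation is that changing \(L\) by a single vertex \(v\) alters \(\mathbf A_{(G,L)}\) in exactly one entry, the diagonal entry \((v,v)\). This is a rank-one perturbation \(\mathbf A\mapsto \mathbf A+e_ve_v^{T}\) over \(\mathrm{GF}(2)\). Hence the rank, and so the corank, changes by at most \(1\). To make this precise I will note that
\[
\operatorname{rank}(\mathbf A+\mathbf B)\le \operatorname{rank}\mathbf A+\operatorname{rank}\mathbf B,
\]
and apply it in both directions with \(\operatorname{rank}\mathbf B=1\). This gives \(|\operatorname{corank}\mathbf A_{(G,L)}-\operatorname{corank}\mathbf A_{(G,L\Delta\{v\})}|\le 1\).

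Now choose subsets \(L_1\) and \(L_2\) attaining coranks \(k\) and \(m\), and list \(L_1\Delta L_2=\{u_1,\dots,u_t\}\). Form the chain
\[
L^{(0)}=L_1,\qquad L^{(i)}=L^{(i-1)}\Delta\{u_i\}\quad (1\le i\le t),
\]
which ends at \(L^{(t)}=L_2\). Along this chain the corank is an integer-valued function whose consecutive values differ by at most \(1\). It starts at \(k\) and ends at \(m\), so by this discrete intermediate value argument it takes every integer value \(j\) with \(k<j<m\). Each such \(j\) corresponds to a nonzero coefficient of \(z^{n-j}\), and the coefficients are positive because they count subsets. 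This yields the interpolating property.

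I do not expect a serious obstacle. The only care needed is to check the rank-one bound over \(\mathrm{GF}(2)\), where \(+\) and \(-\) coincide. That bound is a general fact valid over any field, so the argument goes through.
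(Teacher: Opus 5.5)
Your proof is correct and complete. Toggling one vertex of $L$ changes only the $(v,v)$ entry, i.e.\ adds $e_ve_v^{T}$, so subadditivity of rank applied in both directions bounds the change in corank by $1$. Walking from $L_1$ to $L_2$ one vertex of $L_1\Delta L_2$ at a time then attains every intermediate corank, and hence every intermediate exponent. Your reuse of $k,m,j$ for coranks rather than exponents is harmless, since $e\mapsto n-e$ maps integer intervals to integer intervals.

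The paper gives no proof of this lemma; it imports it from \cite{DengJinYan2026}, so there is nothing to compare step by step. Your rank-one argument is the matrix counterpart of the ribbon-graph fact that twisting a single edge changes the number of faces by at most one. It also applies directly to non-circle graphs, where no bouquet representation is available.
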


It follows from
Lemmas~\ref{lem:degree-n} and~\ref{lem:interpolating} that the nonzero
terms of \(P_G^\times(z)\) occur consecutively from degree
\(n-\max_{L_G\subseteq V(G)}
\operatorname{corank}\mathbf{A}_{(G,L_G)}\) to degree \(n\).
Consequently, \(P_G^\times(z)\) is trinomial if and only if this
maximum corank is \(2\). Thus, the trinomial classification for trees
reduces to characterizing the trees that satisfy this condition.

\newpage
\section{Main Results}\label{sec:main-results}
We first extend the binomial characterization from connected circle graphs to all connected simple graphs and then turn to the trinomial characterization of trees.

\subsection{Binomial Characterization of Connected Simple Graphs}\label{subsec:binomial-characterization}
\begin{lemma}[\cite{YAN2025281}]\label{lem:path-formula}
Let \(P_n\) be a path with \(n\) vertices, where \(n\geq 1\). Then
\[
P_{P_n}^{\times}(z) =
\begin{cases}
\dfrac{2^n+1}{3}z^{n-1} + \dfrac{2^{n+1}-1}{3}z^n,
& \text{if } n \text{ is odd}, \\[1.2em]
\dfrac{2^n-1}{3}z^{n-1} + \dfrac{2^{n+1}+1}{3}z^n,
& \text{if } n \text{ is even}.
\end{cases}
\]
\end{lemma}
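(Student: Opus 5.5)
We have to prove the path formula: for each \(k\in\{0,1\}\), count the subsets \(L\subseteq V(P_n)\) for which \(\operatorname{corank}\mathbf A_{(P_n,L)}=k\). The key observation is that \(\mathbf A_{(P_n,L)}\) is tridiagonal with all off-diagonal entries on the sub- and superdiagonal equal to \(1\). Deleting the first row and last column leaves an upper triangular matrix with ones on its diagonal, so the rank is always at least \(n-1\). Hence the corank is \(0\) or \(1\), and only the exponents \(n\) and \(n-1\) can occur. It therefore suffices to compute
\[
s_n := \#\{L\subseteq V(P_n) : \det \mathbf A_{(P_n,L)}=0 \text{ over GF}(2)\};
\]
the coefficient of \(z^{n-1}\) is then \(s_n\) and that of \(z^n\) is \(2^n-s_n\).

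To count singular diagonals we use the continuant recursion. Let \(D_k\) be the determinant of the leading \(k\times k\) principal submatrix, with diagonal entries \(\ell_1,\dots,\ell_k\in\{0,1\}\). Over GF(2) we have
\[
D_k=\ell_k D_{k-1}+D_{k-2},\qquad D_0=1,\quad D_{-1}=0.
\]
Track the pair \((D_{k-1},D_k)\in\mathrm{GF}(2)^2\). It never equals \((0,0)\), because that state would propagate backwards to \((D_{-1},D_0)=(0,0)\). So it lives in the three nonzero states.

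The transitions are as follows. From \((x,y)\), the next state is \((y,\ell y+x)\):

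\begin{itemize}
\item From \((1,0)\), either choice of \(\ell\) gives \((0,1)\).
\item From \((0,1)\), the next state is \((1,\ell)\), i.e. \((1,0)\) or \((1,1)\).
\item From \((1,1)\), the next state is \((1,\ell+1)\), again \((1,0)\) or \((1,1)\).
\end{itemize}

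Let \(a_k\) be the number of choices of \(\ell_1,\dots,\ell_k\) with \(D_k=0\), i.e. state \((1,0)\). Then \(2^k-a_k\) choices end in \(D_k=1\). States \((0,1)\) and \((1,1)\) each send exactly one of their two continuations to \((1,0)\), while state \((1,0)\) sends none. This gives
\[
a_{k+1}=2^k-a_k,\qquad a_0=0.
\]
Solving yields \(a_n=(2^n-(-1)^n)/3\): for \(n\) odd this is \((2^n+1)/3\), and for \(n\) even it is \((2^n-1)/3\). Then \(2^n-a_n\) equals \((2^{n+1}-1)/3\) and \((2^{n+1}+1)/3\) respectively, matching the statement. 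The case \(n=1\) is a direct check: \(L=\emptyset\) gives corank \(1\) and \(L=\{v\}\) gives corank \(0\), so \(z^0+z^1\).

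The main point needing care is the transition bookkeeping, namely verifying that exactly one of the two extensions leads to a zero determinant from each nonzero-determinant state and none from a zero one. Alternatively, this can be argued directly: if \(D_k=0\) then \(D_{k-1}=1\) and \(D_{k+1}=D_{k-1}=1\) regardless of \(\ell_{k+1}\). If \(D_k=1\), then \(D_{k+1}=\ell_{k+1}+D_{k-1}\) vanishes for exactly one value of \(\ell_{k+1}\). This gives the recurrence immediately and finishes the proof.
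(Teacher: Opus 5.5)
Your proof is correct and complete. The triangular-submatrix observation does give rank at least \(n-1\) for every \(L\). The continuant recursion \(D_k=\ell_kD_{k-1}+D_{k-2}\) is right over \(\mathrm{GF}(2)\). The count \(a_{k+1}=2^k-a_k\), \(a_0=0\), solves to \(a_n=(2^n-(-1)^n)/3\), and this matches both parity cases.

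The paper gives no proof of this lemma; it quotes it from Yan and Li. It then uses it only in the Jacobsthal form \(P^\times_{P_m}(z)=J_{m+1}z^m+J_mz^{m-1}\), for instance in the proof of Theorem~\ref{thm:T-shape_explicit}.

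Within the paper's own toolkit, the natural derivation goes through the leaf recurrence of Lemma~\ref{lem:leaf_reduction}. Deleting an end vertex gives \(P^\times_{P_n}(z)=zP^\times_{P_{n-1}}(z)+2z^2P^\times_{P_{n-2}}(z)\). Then apply Lemma~\ref{lem:leaf-jacobsthal-recurrence} with \(G_0=P_1\), so \(P^\times_{G_0}=1+z\), and \(G_{-1}\) the null graph, so \(P^\times_{G_{-1}}=1\). This yields \(J_{n+1}z^n+J_nz^{n-1}\) at once.

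Your route differs in being elementary and self-contained, independent of the cited Deng--Jin--Yan recurrence. It also proves the corank bound \(\le 1\) for every \(L\) up front, and that bound is exactly what makes paths binomial. Your state-transition count is in effect a hand proof of the leaf recurrence specialised to paths: you count singular diagonals rather than manipulating whole polynomials.

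Note that your \(a_n\) is precisely the Jacobsthal number \(J_n\). Stating that would put your result in the form the paper actually uses. The polynomial-recurrence route has the advantage that it carries over verbatim to the T-shape and H-shape computations, where no single tridiagonal structure is available.
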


\begin{theorem}[\cite{FENG2026411}]\label{thm:path}
A connected circle graph \(G\) on \(n\) vertices has a binomial partial
Petrial polynomial if and only if \(G\) is a path.
\end{theorem}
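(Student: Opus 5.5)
The plan is to prove the two directions separately. Sufficiency comes directly from the explicit formula for paths. Necessity combines the obstruction result Theorem~\ref{thm:not-path} with the corank invariance of Proposition~\ref{prop:minor-corank} and the interpolating property. Connectedness and the circle-graph hypothesis are not really used in the necessity argument, since Definition~\ref{def:partial-petrial-polynomial} agrees with Definition~\ref{def:circle-graph-partial-petrial-polynomial} on circle graphs. So the same argument should give the statement for all connected simple graphs.

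\emph{Sufficiency.} If \(G=P_n\), Lemma~\ref{lem:path-formula} writes \(P^\times_{P_n}(z)\) as a combination of \(z^{n-1}\) and \(z^n\). It remains to check that both coefficients are nonzero.
\begin{itemize}
\item For odd \(n\), the coefficients \((2^n+1)/3\) and \((2^{n+1}-1)/3\) are positive integers; for \(n=1\) both equal \(1\).
\item For even \(n\ge 2\), the coefficients \((2^n-1)/3\) and \((2^{n+1}+1)/3\) are positive integers.
\end{itemize}
Hence the polynomial has exactly two nonzero terms.

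\emph{Necessity.} Suppose \(G\) is not a path, with \(n\) vertices.
\begin{enumerate}
\item By Theorem~\ref{thm:not-path}, choose \(L_G\subseteq V(G)\) such that \((G,L_G)\) has a local complementation minor \((G',L_{G'})\) in which \(G'\) has two isolated vertices \(u,w\notin L_{G'}\).
\item In \(\mathbf A_{(G',L_{G'})}\), the rows indexed by \(u\) and \(w\) are identically zero. The off-diagonal entries vanish because \(u,w\) are isolated, and the diagonal entries vanish because \(u,w\notin L_{G'}\). Hence the rank is at most \(|V(G')|-2\), so \(\operatorname{corank}\mathbf A_{(G',L_{G'})}\ge 2\).
\item By Proposition~\ref{prop:minor-corank}, \(\operatorname{corank}\mathbf A_{(G,L_G)}\ge 2\). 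So \(P_G^\times(z)\) contains a term \(z^{k}\) with \(k\le n-2\).
\item By Lemma~\ref{lem:degree-n}, \(P_G^\times(z)\) also contains \(z^n\).
\item By Lemma~\ref{lem:interpolating}, it then contains \(z^{n-1}\) as well.
\end{enumerate}
Thus \(P_G^\times(z)\) has at least three nonzero terms and is not binomial.

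We also need to exclude \(P_G^\times(z)\) being a monomial when \(G\) is a path. This is already settled by the sufficiency computation, since both coefficients there are positive.

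The only genuinely nontrivial ingredient is the existence of the minor with two isolated non-\(L\) vertices, which we take from Theorem~\ref{thm:not-path}. Given that, the main point to verify carefully is step 2: that isolated vertices outside \(L_{G'}\) really give zero rows and hence force corank at least \(2\). Everything else is bookkeeping with the degree and interpolation lemmas.
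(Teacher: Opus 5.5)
Your proof is correct and is essentially the argument the paper gives for its generalization, Theorem~\ref{thm:simple-graph-path}. Theorem~\ref{thm:not-path} and the zero rows give corank at least $2$, Proposition~\ref{prop:minor-corank} transfers this to $(G,L_G)$, and Lemmas~\ref{lem:degree-n} and~\ref{lem:interpolating} turn it into a third nonzero term. You argue contrapositively rather than by contradiction and also check that the path coefficients are positive; as you note, the circle-graph hypothesis enters only through Lemma~\ref{lem:faces-corank}, which identifies the bouquet definition with the matrix formula — the step the paper singles out in the original proof of Feng, Yan, and Zheng.
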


The argument used in the proof of the preceding theorem cannot be
applied directly to arbitrary simple graphs. In that argument, the
circle-graph assumption is used precisely in passing from the
binomiality of \(P_G^\times(z)\) to the bound
\[
\operatorname{corank}\mathbf{A}_{(G,L_G)}\leq1
\qquad
\text{for every }L_G\subseteq V(G).
\]
Indeed, the proof in~\cite{FENG2026411} chooses an orientable bouquet
\(B\) whose intersection graph \(I(B)\) is isomorphic to \(G\).
After identifying \(E(B)\) with \(V(I(B))\), it applies
Lemma~\ref{lem:faces-corank} to express the Euler genus of each
partial Petrial \(B^{\times|D}\), \(D\subseteq E(B)\), in terms of
the corank of \(\mathbf{A}_{(I(B),D)}\). Consequently, the exponent
contributed by \(D\) is
\(n-\operatorname{corank}\mathbf{A}_{(I(B),D)}\). If \(G\) is not a
circle graph, no such bouquet exists, so this step is unavailable.

Theorem~\ref{thm:not-path}, however, already provides the required
structural obstruction for every simple graph that is not a path.
Moreover, Definition~\ref{def:partial-petrial-polynomial} expresses
\(P_G^\times(z)\) directly in terms of the coranks of
\(\mathbf{A}_{(G,L_G)}\) for every simple graph. Hence,
Lemmas~\ref{lem:degree-n} and~\ref{lem:interpolating} show that the
binomiality of \(P_G^\times(z)\) implies the displayed bound without
requiring a bouquet representation. Combining this bound with
Theorem~\ref{thm:not-path} yields the following extension.

\begin{theorem}\label{thm:simple-graph-path}
A connected simple graph \(G\) on \(n\) vertices has a binomial partial
Petrial polynomial if and only if \(G\) is a path.
\end{theorem}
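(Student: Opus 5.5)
The plan is to prove the two directions separately, using Definition~\ref{def:partial-petrial-polynomial} throughout so that no bouquet representation is needed.

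\emph{Sufficiency.} If \(G=P_n\), Lemma~\ref{lem:path-formula} gives \(P_G^\times(z)\) explicitly. Both coefficients \((2^n\pm1)/3\) and \((2^{n+1}\mp1)/3\) are positive integers for every \(n\ge 1\). For \(n=1\) the first is \((2+1)/3=1\), so \(P_{P_1}^\times(z)=1+z\). Hence the polynomial has exactly two nonzero terms. Note that although Lemma~\ref{lem:path-formula} is quoted from the circle-graph setting, paths are circle graphs, so the matrix definition agrees and the formula applies.

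\emph{Necessity.} Suppose \(P_G^\times(z)\) is binomial and, for contradiction, that \(G\) is not a path.

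1. By Lemma~\ref{lem:degree-n}, the term \(z^n\) occurs. By Lemma~\ref{lem:interpolating}, the exponents of the nonzero terms form the consecutive interval from \(n-\max_{L}\operatorname{corank}\mathbf A_{(G,L)}\) to \(n\). The polynomial therefore has exactly \(1+\max_L\operatorname{corank}\mathbf A_{(G,L)}\) nonzero terms. Binomiality forces \(\operatorname{corank}\mathbf A_{(G,L_G)}\le 1\) for every \(L_G\subseteq V(G)\).

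2. Apply Theorem~\ref{thm:not-path} to obtain \(L_G\) and a local complementation minor \((G',L_{G'})\) of \((G,L_G)\). In \(G'\) there are two isolated vertices \(u,w\notin L_{G'}\).

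3. The rows of \(\mathbf A_{(G',L_{G'})}\) indexed by \(u\) and \(w\) are identically zero. This is because they are isolated, so all off-diagonal entries vanish, and they lie outside \(L_{G'}\), so the diagonal entries vanish too. Hence the rank is at most \(|V(G')|-2\), and \(\operatorname{corank}\mathbf A_{(G',L_{G'})}\ge 2\).

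4. By Proposition~\ref{prop:minor-corank}, \(\operatorname{corank}\mathbf A_{(G,L_G)}\ge 2\), which contradicts step 1.

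The connectedness hypothesis is used only to exclude graphs such as disjoint unions. Even these are handled by Theorem~\ref{thm:not-path}, since they are not paths. Otherwise connectedness just matches the statement.

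The main point requiring care is step 1, the passage from ``binomial'' to ``maximum corank \(\le 1\)''. This is where the circle-graph argument fails in general. Here it follows purely from the interpolation property together with the degree lemma: since the exponent \(n-\operatorname{corank}\) ranges over a consecutive block ending at \(n\), having exactly two nonzero terms pins the minimum exponent at \(n-1\). Everything else is a direct assembly of the cited results.
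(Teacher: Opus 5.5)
Your proposal is correct and follows the paper's proof essentially verbatim. Sufficiency comes from the explicit path formula. For necessity, the degree and interpolation lemmas force \(\operatorname{corank}\mathbf A_{(G,L_G)}\le 1\) for every \(L_G\), and Theorem~\ref{thm:not-path} combined with Proposition~\ref{prop:minor-corank} then gives the contradiction. Your extra checks (positivity of both path coefficients, the \(n=1\) case, and the remark that connectedness is not really needed given Theorem~\ref{thm:not-path} as stated) are accurate, though not required.
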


\begin{proof}
\emph{Sufficiency.}
If \(G\) is a path, then
Lemma~\ref{lem:path-formula} shows that \(P_G^\times(z)\) has exactly
two nonzero terms. Hence \(P_G^\times(z)\) is binomial.

\emph{Necessity.}
Suppose that \(P_G^\times(z)\) is binomial. By
Lemmas~\ref{lem:degree-n} and~\ref{lem:interpolating}, its two
nonzero exponents are \(n-1\) and \(n\). It follows from
Definition~\ref{def:partial-petrial-polynomial} that
\begin{equation}\label{eq:corank-condition}
\operatorname{corank}\bigl(\mathbf{A}_{(G,L_G)}\bigr)\leq1
\qquad
\text{for every }L_G\subseteq V(G),
\end{equation}
since any \(L_G\) for which the corank is at least \(2\) would
contribute a term of degree at most \(n-2\).

Suppose, to the contrary, that \(G\) is not a path. By
Theorem~\ref{thm:not-path}, there exists \(L_G\subseteq V(G)\) such
that \((G,L_G)\) has a local complementation minor
\((G',L_{G'})\), where \(G'\) has at least two isolated vertices
outside \(L_{G'}\). Let \(u\) and \(w\) be two such vertices. The rows
of \(\mathbf{A}_{(G',L_{G'})}\) indexed by \(u\) and \(w\) are zero,
and hence
\[
\operatorname{corank}\bigl(\mathbf{A}_{(G',L_{G'})}\bigr)\geq2.
\]
By Proposition~\ref{prop:minor-corank},
\[
\operatorname{corank}\bigl(\mathbf{A}_{(G,L_G)}\bigr)
=
\operatorname{corank}\bigl(\mathbf{A}_{(G',L_{G'})}\bigr)
\geq2,
\]
contradicting~\eqref{eq:corank-condition}. Therefore \(G\) is a path.
\end{proof}

\subsection{Trinomial Characterization of Trees}
\label{subsec:trinomial-characterization}

We now turn to trees with trinomial partial Petrial polynomials. By
Definition~\ref{def:partial-petrial-polynomial} and
Lemmas~\ref{lem:degree-n} and~\ref{lem:interpolating}, a tree \(T\)
has a trinomial partial Petrial polynomial if and only if
\[
\max_{L_T\subseteq V(T)}
\operatorname{corank}\mathbf{A}_{(T,L_T)}=2.
\]
We therefore seek to characterize the trees satisfying this equality.

For positive integers \(a,b,c\), a \emph{T-shape tree}
\(T(a,b,c)\)~\cite{wang2014signless} is obtained by attaching three pendant paths
of lengths \(a,b,c\) to a central vertex. Its degree sequence is
\((3,2,\ldots,2,1,1,1)\).

Similarly, for positive integers \(a,b,c,d\), an \emph{H-shape tree}
\(H(a,b,c,d)\)~\cite{chen2023divisibility} consists of two adjacent vertices of degree
\(3\), one incident with two pendant paths of lengths \(a\) and \(b\),
and the other with two pendant paths of lengths \(c\) and \(d\). Its
degree sequence is
\((3,3,2,\ldots,2,1,1,1,1)\).

The numbers of vertices of degree \(2\) in \(T(a,b,c)\) and
\(H(a,b,c,d)\) are \(a+b+c-3\) and \(a+b+c+d-4\), respectively.
Thus, in either degree sequence, the string \(2,\ldots,2\) may be
empty.

We can now state the main classification theorem.

\begin{theorem}\label{thm:structure}
    A tree \(T\) on \(n\) vertices has a trinomial partial Petrial polynomial if and only if \(T\) is a T-shape tree or an H-shape tree.
\end{theorem}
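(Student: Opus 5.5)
By the reduction recorded just before the statement, it suffices to show that \(\max_{L_T}\operatorname{corank}\mathbf A_{(T,L_T)}=2\) holds exactly for T-shape and H-shape trees. The proof therefore has an upper-bound half (sufficiency) and an obstruction half (necessity).

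\emph{Sufficiency.} I would bound the corank directly, without first computing the full polynomial. Let \(T\) be \(T(a,b,c)\) or \(H(a,b,c,d)\). Since \(T\) is not a path, Theorem~\ref{thm:simple-graph-path} together with Lemma~\ref{lem:interpolating} gives maximum corank at least \(2\). For the upper bound, let \(v\) be a degree-\(3\) vertex. Deleting the row and column of \(v\) changes the rank by at most \(2\), so the corank changes by at most \(1\). For \(T(a,b,c)\) this leaves a disjoint union of three paths. A path graft has corank at most \(1\): a tridiagonal matrix with nonzero off-diagonal entries has its leading \((n-1)\times(n-1)\) minor after removing the first column equal to \(1\). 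This would only give \(\le 4\), so the cruder estimate has to be refined. Instead I would use the same minor argument. In \(\mathbf A_{(T,L)}\), choose the ordering along one leaf-to-leaf path of the tree. Deleting the first row and first column chosen appropriately leaves a matrix containing a triangular block with ones on its diagonal whose size is \(n-k\), where \(k\) is the number of leaves minus one. This gives \(\operatorname{rank}\ge n-(\#\text{leaves}-1)\).

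For T-shape trees there are \(3\) leaves, so the corank is at most \(2\). Concretely, the null space is determined by the values at a leaf of each of the branches, because along a pendant path the equation at each vertex determines the next coordinate. So the null-space dimension is at most the number of free starting values, minus the constraints imposed at the branch vertices. For T-shape trees I would start the propagation from all three leaves: the three pendant paths are determined by \(3\) parameters, and the equation at one leaf forces consistency, giving dimension \(\le 3-1=2\). For H-shape trees I would propagate from the \(4\) leaves. The values at both centres are then determined, and the equations at the two centres impose \(2\) constraints, so the dimension is at most \(4-2=2\). I would write this propagation argument carefully as a lemma: for any graft on a tree, each leaf-to-centre propagation is forced, and each degree-\(3\) vertex whose value is determined from two sides yields one consistency equation. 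As a cross-check, the explicit Jacobsthal formulas promised later should confirm that all three coefficients are positive.

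\emph{Necessity.} Let \(T\) be a tree that is neither a path, nor T-shape, nor H-shape. Then either \(T\) has a vertex of degree \(\ge 4\), or \(T\) has at least two degree-\(3\) vertices and is not H-shape. In the second case either there are at least three such vertices, or there are two that are nonadjacent. The goal is to find \(L_T\) and a local complementation minor of \((T,L_T)\) with three isolated vertices outside the distinguished set. Proposition~\ref{prop:minor-corank} then gives corank \(\ge 3\), and Lemma~\ref{lem:interpolating} produces at least four terms.

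I would first show that the property passes from an induced subtree. If \(T'\) is a subtree obtained by deleting leaves, the reverse of pendant-path contraction should work: a pendant vertex \(u\in L\) whose neighbour \(w\) has degree \(2\) can be eliminated by \(\lcom u\), which toggles \(w\)'s membership in \(L\). Contracting long paths in this way reduces the problem to finitely many minimal configurations. These are the star \(K_{1,4}\) (subdivided), two degree-\(3\) vertices joined by a path of length \(2\), and a path with three degree-\(3\) vertices. For each minimal tree I would exhibit \(L\) and the sequence of \(\lcom\) operations explicitly, much as in Theorem~\ref{thm:not-path}. Taking \(L=\emptyset\) on a \(K_{1,4}\) already gives corank \(3\), since the four leaf rows are identical. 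Finally, I would argue that extra branches of \(T\) can be absorbed by putting their vertices into \(L\) and eliminating them so as not to disturb the isolated vertices.

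I expect this last absorption step to be the main obstacle. The difficulty is choosing \(L\) on the rest of the tree so that eliminating it leaves the three isolated vertices intact and outside \(L\). I would try first to make the remaining parts contribute a nonsingular block, using Lemma~\ref{lem:degree-n} applied to the forest left after removing the core, so that the corank splits additively.
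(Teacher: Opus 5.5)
The necessity half has a genuine gap at the step you call the main obstacle, and the remedy you suggest does not work as stated. The missing idea is that lifting a graft across a local complementation deletion at a leaf is unconditional. If $x$ is a leaf of $T$ with neighbour $y$, then for \emph{every} $L'\subseteq V(T-x)$ the set $L=\{x\}\cup(L'\triangle\{y\})$ satisfies $(T,L)\lcom x=(T-x,L')$, whatever the degree of $y$. Your restriction to a neighbour of degree $2$ is unnecessary. Every subtree $T'$ of $T$ is reached by repeated leaf deletions, so doing these deletions \emph{first} lifts any graft $(T',L')$ to some $(T,L)$. Proposition~\ref{prop:minor-corank} then gives $\operatorname{corank}\mathbf A_{(T,L)}=\operatorname{corank}\mathbf A_{(T',L')}$. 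Nothing is left to absorb, and you do not even need isolated vertices: $K_{1,4}$ with $L'=\emptyset$ already has corank $3$. This is Lemma~\ref{lem:leaf-deletion-lifting}. Your alternative, making the rest of the tree a nonsingular block so that the corank ``splits additively'', overlooks that the remainder is joined to the core by edges. So $\mathbf A_{(T,L)}$ is not block diagonal, and the Schur complement with respect to that block changes diagonal entries of the core, which you would then have to compensate for.

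Second, eliminating pendant vertices never shortens an interior path, so your reduction does not end in finitely many configurations. Two degree-$3$ vertices at distance $k$ survive for every $k\ge2$, and this matters: for $k=3$ with two pendant leaves at each end, $L=\emptyset$ gives only corank $2$. You need the companion lift at a degree-$2$ vertex $v$ with neighbours $j,k$. Here $T\lcom v$ suppresses $v$, and $L=\{v\}\cup(L'\triangle\{j,k\})$ gives $(T,L)\lcom v=(T\lcom v,L')$; the paper's induction in Theorem~\ref{th:main} relies on exactly this step. With both lifts, a tree outside $\mathcal E$ reduces to one of two small trees, both of corank $3$ for $L=\emptyset$. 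A vertex of degree at least $4$ gives $K_{1,4}$. Otherwise, take two degree-$3$ vertices at distance at least $2$ with two further neighbours of each, and suppress all but one interior vertex, giving $T_1$ of Figure~\ref{fig:T_1}.

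Your sufficiency half, a direct nullity bound instead of the Jacobsthal formulas of Theorems~\ref{thm:T-shape_explicit} and~\ref{thm:H-shape_trinomial}, is a legitimate shortcut. However, your lemma that each degree-$3$ vertex reached from two sides ``yields one consistency equation'' does not hold as stated. In $K_{1,3}$ or $H(1,1,1,1)$ with $L=\emptyset$, every branch forces the centre to $0$, so that equation is vacuous and the constraint comes from the centre's own row. The clean version is zero forcing: two leaves (for $H(a,b,c,d)$, one on each side) force the whole tree. Hence a null vector vanishing there is zero, and the nullity is at most $2$.
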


By Lemma~\ref{lem:path-formula}, paths have binomial partial Petrial
polynomials. Let \(\mathcal E\) denote the family consisting of paths,
T-shape trees, and H-shape trees. To prove the necessity part of
Theorem~\ref{thm:structure}, it remains to show that every tree
\(T\notin\mathcal E\) satisfies
\[
\max_{L_T\subseteq V(T)}
\operatorname{corank}\mathbf A_{(T,L_T)}\geq3.
\]
The following theorem provides the required structural obstruction.

\begin{theorem}\label{th:main}
    Let $T$ be a tree on \(n\) vertices with \(T\notin \mathcal{E}\). Then there exists $L_T \subseteq V(T)$ such that the graft $(T,L_T)$ contains a local complementation minor $(T',L_{T'})$ where $T'$ has at least three isolated vertices, none of which belongs to $L_{T'}$.
\end{theorem}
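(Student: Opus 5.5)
We plan to prove Theorem~\ref{th:main} by first reducing to a small family of minimal obstructions, and then exhibiting an explicit graft and local complementation minor for each member.

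\textbf{Step 1: Minimal obstructions.} A tree \(T\notin\mathcal E\) must satisfy one of the following:
\begin{enumerate}[label=(\roman*)]
\item \(T\) has a vertex of degree at least \(4\);
\item \(T\) has maximum degree \(3\) and at least three vertices of degree \(3\);
\item \(T\) has exactly two degree-\(3\) vertices, and they are not adjacent.
\end{enumerate}
Accordingly, \(T\) contains as a subtree one of:
\begin{enumerate}[label=(\roman*)]
\item \(K_{1,4}\);
\item three degree-\(3\) vertices joined along paths, each carrying its extra pendant edges;
\item two degree-\(3\) vertices joined by a path of length \(\ge 2\), each carrying two pendant edges.
\end{enumerate}

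\textbf{Step 2: The local gadget.} The basic tool is a single local complementation deletion. Take \(v\in L\) to be a leaf whose neighbor \(u\) has \(N(u)=\{v,w\}\). Then \(T\lcom v\) simply deletes \(v\) and toggles \(u\) in \(L\), so its effect is local. Next, deleting a vertex \(x\in L\) whose neighbors are all the other endpoints removes \(x\) and complements its neighborhood. For a vertex \(x\) with neighbors \(y_1,\dots,y_k\), applying \(\lcom x\) with \(x\in L\) turns \(N(x)\) into a clique \(K_k\) and toggles membership of each \(y_i\) in \(L\).

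In a tree, the neighbors of \(x\) lie in different components of \(T-x\). Hence we will peel each pendant path down to its attachment using the path-type reductions from the proof of Theorem~\ref{thm:not-path} in~\cite{FENG2026411}. Concretely, choose \(L\) on each pendant path so that repeated deletions contract the path while leaving the endpoint adjacent to its anchor either isolated and not in \(L\), or removed entirely. By the parity behavior visible in Lemma~\ref{lem:path-formula}, a suitable choice of \(L\) on a path of length \(\ell\) realizes either outcome. The vertices of \(T\) outside the chosen subtree are absorbed the same way: each is a branch hanging off the subtree, and we first reduce it away by choosing \(L\) on it so that it is eliminated without leaving residual adjacency to the core, or leaving one extra isolated vertex, which only helps.

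\textbf{Step 3: Explicit verification on small cores.} After reduction each case lands on a small core, and we verify it by hand (reusing Proposition~\ref{prop:minor-corank}):
\begin{enumerate}[label=(\roman*)]
\item For \(K_{1,4}\) with center \(c\notin L\), place one leaf \(\ell_1\in L\). Deleting \(\ell_1\) puts \(c\) into \(L\). Then \(\lcom c\) produces \(K_3\) on the other three leaves, with all three toggled into \(L\). Further deletion of one of them leaves an edge on two vertices in \(L\) toggled, which is not yet three isolated vertices. So instead we will choose \(L=\{c\}\) directly: \(\lcom c\) gives \(K_4\) with all leaves in \(L\). Deleting one more gives the complement of \(K_3\), namely three isolated vertices, with \(L\) toggled to \(\emptyset\). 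This is exactly the required outcome.
\item Handled by analogous explicit computations.
\item Handled likewise, e.g.\ by deleting the centres of the two degree-\(3\) vertices' stars after the connecting path has been shortened to length \(2\), whose middle vertex then becomes isolated.
\end{enumerate}

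\textbf{Main obstacle.} The hard step is Step 2: making the reduction of the rest of \(T\) rigorous. We must ensure that eliminating extra branches and long pendant paths never creates edges between the core's designated isolated vertices, and never leaves them in \(L\). We will handle this by induction on \(|V(T)|\), using the fact that a local complementation deletion at a degree-\(\le2\) vertex of a tree yields a tree again. The inductive step is to show that any tree not in \(\mathcal E\) with a long pendant path or a superfluous branch has a one-step minor that is again a tree not in \(\mathcal E\), with \(L\) adjustable. This reduces everything to finitely many base trees, which are checked as in Step 3.
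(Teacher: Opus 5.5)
\paragraph{Verdict.} Your architecture matches the paper: reduce \(T\) by liftable one-step deletions at leaves and degree-\(2\) vertices down to a small core, then check the core by hand. Your \(K_{1,4}\) computation with \(L=\{c\}\) is exactly the paper's star case. The gap is in the other base case, which is where the real content lies.

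\paragraph{Case (iii) fails as sketched.} You claim that once the connecting path has length \(2\), deleting the two degree-\(3\) centres isolates the middle vertex. It does not. Label the core with centres \(p,q\), middle vertex \(m\), leaves \(p_1,p_2\) at \(p\) and leaves \(q_1,q_2\) at \(q\). The graph produced by a sequence of local complementation deletions depends only on the sequence of deleted vertices; \(L\) only decides admissibility. Here \(\lcom p\) turns \(\{p_1,p_2,m\}\) into a triangle, and \(\lcom q\) turns \(\{m,q_1,q_2\}\) into a triangle. So for every admissible \(L\) you get two triangles sharing \(m\). Then \(m\) has degree \(4\) and no vertex is isolated; one isolated vertex would not have sufficed anyway.

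Two more deletions are needed. Start from \(L=\{p,q\}\).
\begin{enumerate}
\item After \(\lcom p\lcom q\), the subset is \(L=\{p_1,p_2,q_1,q_2\}\).
\item \(\lcom p_1\) removes the edge \(p_2m\) and gives \(L=\{m,q_1,q_2\}\).
\item \(\lcom m\) removes \(q_1q_2\) and gives \(L=\emptyset\), leaving \(p_2,q_1,q_2\) isolated.
\end{enumerate}
This is the paper's computation on \(T_1\).

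\paragraph{Case (ii) needs no separate core.} You leave it as ``analogous computations'', but three degree-\(3\) vertices of a tree cannot be pairwise adjacent. So two of them are non-adjacent, and \(T\) already contains the case-(iii) configuration.

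\paragraph{Step 2 is not an argument as written.} The ``parity behaviour'' of Lemma~\ref{lem:path-formula} counts subsets by corank; it says nothing about which minors arise. What you need, and what dissolves your stated main obstacle, is exact lifting.
\begin{enumerate}
\item For a leaf \(x\) with neighbour \(y\), take \(L_T=\{x\}\cup(L'\triangle\{y\})\). Then \((T,L_T)\lcom x=(T-x,L')\).
\item For a degree-\(2\) vertex \(v\) with neighbours \(u,w\), take \(L_T=\{v\}\cup(L'\triangle\{u,w\})\). Then \((T,L_T)\lcom v=(T\lcom v,L')\), where \(T\lcom v\) is \(T\) with \(v\) suppressed.
\end{enumerate}
Every subtree is reachable by leaf deletions. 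Hence any \(T\notin\mathcal E\) reduces, with an arbitrary prescribed subset on the result, to one of two cores:
\begin{enumerate}
\item \(K_{1,4}\), if some vertex has degree at least \(4\);
\item otherwise, after suppressing internal vertices of the connecting path, the \(7\)-vertex core above.
\end{enumerate}
No edges can appear among the designated vertices, because the core graft is reproduced exactly.

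With these fixes, your direct reduction closes the proof. It is also somewhat more economical than the paper's induction with its separate exceptional-case lemma.
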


\begin{lemma}\label{lem:leaf-deletion-lifting}
Let \(T'\) be obtained from a tree \(T\) by a sequence of leaf
deletions. If \(T'\) admits a local complementation minor as in the preceding theorem, then so does \(T\).
\end{lemma}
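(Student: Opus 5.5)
The plan is to reduce to a single leaf deletion and then show that deleting one leaf can itself be realized as a local complementation deletion, once the leaf and its neighbour carry suitable labels. Since "local complementation minor" is defined as the result of an arbitrary finite sequence of local complementation deletions, concatenating two such sequences is again one. So an induction on the number of leaf deletions reduces the lemma to the case where \(T'=T-\ell\) for a single leaf \(\ell\) of \(T\). (We may assume \(T\) has at least two vertices, so \(\ell\) has a unique neighbour \(p\).)

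By hypothesis there is \(L_{T'}\subseteq V(T')\) and a sequence of vertices \(v_1,\dots,v_k\) with the following property: performing \(\underline{*}v_1,\dots,\underline{*}v_k\) on \((T',L_{T'})\), where each \(v_i\) lies in the current distinguished set when it is used, produces a graft \((T'',L_{T''})\) in which \(T''\) has at least three isolated vertices outside \(L_{T''}\). I would define
\[
L_T:=\bigl(L_{T'}\Delta\{p\}\bigr)\cup\{\ell\}\subseteq V(T).
\]

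Then \(\ell\in L_T\), so the local complementation deletion at \(\ell\) is allowed, and I would compute it directly. Since \(N_T(\ell)=\{p\}\) is a single vertex, complementing the induced subgraph on \(N_T(\ell)\) changes no edges. Hence \(T*\ell=T\) and \(T\underline{*}\ell=T-\ell=T'\). For the distinguished set,
\[
(L_T\setminus\{\ell\})\Delta N_T(\ell)=\bigl(L_{T'}\Delta\{p\}\bigr)\Delta\{p\}=L_{T'}.
\]
Therefore \((T,L_T)\underline{*}\ell=(T',L_{T'})\).

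Applying the same sequence \(v_1,\dots,v_k\) afterwards gives \((T,L_T)\underline{*}\ell\,\underline{*}v_1\cdots\underline{*}v_k=(T'',L_{T''})\). This is a local complementation minor of \((T,L_T)\) with at least three isolated vertices outside \(L_{T''}\), as required.

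The only point needing care is that each \(v_i\) still belongs to the current distinguished set at the moment it is used. This holds automatically because the grafts after the first step coincide exactly with those in the original sequence for \(T'\). I therefore expect no real obstacle beyond correctly compensating the toggle at \(p\), which is exactly what the choice \(L_{T'}\Delta\{p\}\) does.
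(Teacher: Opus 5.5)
Your proposal is correct and matches the paper's proof: you reduce to a single leaf deletion, set \(L_T=(L_{T'}\Delta\{p\})\cup\{\ell\}\) so that \((T,L_T)\underline{*}\ell=(T',L_{T'})\), and then append the given sequence of local complementation deletions. You also note that every later graft coincides with the corresponding graft in the original sequence, so each \(v_i\) is still in the distinguished set when it is used; the paper leaves this implicit in the phrase ``iterating this argument.''
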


\begin{proof}
It suffices to consider one local complementation deletion at a leaf. Let \(x\) be a leaf of \(T\), let
\(y\) be its unique neighbour, and put \(T'=T\lcom x\). Since \(x\) is a leaf, local complementation deletion at \(x\) toggles no edges, and therefore
\[
    T\lcom x=T-x.
\]

Let \(L_{T'}\subseteq V(T')\). Set
\[
    L_T=\{x\}\cup (L_{T'}\triangle\{y\}).
\]
Then \(x\in L_T\), so local complementation deletion at \(x\) is allowed.
Moreover, the resulting subset is
\[
    (L_T\setminus\{x\})\triangle N_T(x)
    =
    (L_{T'}\triangle\{y\})\triangle\{y\}
    =
    L_{T'}.
\]
Hence
\[
    (T,L_T)\lcom x=(T',L_{T'}).
\]
Thus every subset \(L_{T'}\subseteq V(T')\) can be lifted to a subset
\(L_T\subseteq V(T)\). Iterating this argument along the sequence of leaf
deletions proves the claim.
\end{proof}

\begin{lemma}\label{lem:exceptional case}
    Let \(T\) be a tree on \(n\) vertices with  \(T \notin \mathcal{E}\). Suppose that at least one of the following conditions holds:
    \begin{enumerate}
        \item \(T \cong K_{1,n-1}\) for \(n \ge 5\);
        \item \(T\) contains a vertex \(v\) of degree \(2\) such that \(T \lcom v \in \mathcal{E}\);
        \item \(T\) contains no vertex of degree \(2\), and for every leaf \(v\) of \(T\), one has 
        \(T\lcom v=T-v\in \mathcal{E}.\)
    \end{enumerate}
    Then there exists \(L_T\subseteq V(T)\) such that the graft \((T,L_T)\) contains a local complementation minor \((T',L_{T'})\), where \(T'\) has at least three isolated vertices, none of which belongs to \(L_{T'}\).
\end{lemma}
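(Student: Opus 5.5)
The plan is to first record a general lifting principle, then treat the three conditions separately. Conditions 2 and 3 will be reduced to small explicit trees, which are handled either by direct computation or via condition 1.

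\emph{Lifting principle.} This generalises the proof of Lemma~\ref{lem:leaf-deletion-lifting}. Let \(x\) be any vertex of \(G\) and \(L'\subseteq V(G)\setminus\{x\}\). Set \(L=\{x\}\cup(L'\triangle N_G(x))\). Then \(x\in L\), and
\[
(G,L)\lcom x=(G\lcom x,\,L').
\]
Consequently the final distinguished set of any sequence of local complementation deletions can be prescribed arbitrarily, by choosing the initial \(L_T\) backwards along the sequence. So throughout it suffices to exhibit a sequence of graphs \(T=G_0,G_1=G_0\lcom x_1,\dots,G_k\) in which \(G_k\) has at least three isolated vertices. We then take \(L_{G_k}=\emptyset\) and lift.

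\emph{Condition 1.} Let \(c\) be the centre of \(K_{1,n-1}\). Then \(K_{1,n-1}\lcom c=K_{n-1}\), since the leaves become pairwise adjacent. For any vertex \(u\) of \(K_{n-1}\), the graph \(K_{n-1}*u\) complements the clique on \(N(u)\). Hence \(K_{n-1}\lcom u\) is the edgeless graph on \(n-2\ge3\) vertices. Lifting with final set \(\emptyset\) finishes this case.

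\emph{Condition 2.} Let \(v\) have degree \(2\) with neighbours \(a,b\). In a tree \(a\) and \(b\) are non-adjacent, so \(T\lcom v\) is \(T\) with \(v\) suppressed. The family \(\mathcal E\) is closed under subdividing any edge other than the central edge of an H-shape tree, and under suppressing degree-\(2\) vertices. Hence \(T\notin\mathcal E\) but \(T\lcom v\in\mathcal E\) forces the following structure: \(T\) consists of two degree-\(3\) vertices \(x,y\) joined by the path \(x\,v\,y\), each carrying two pendant paths.

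By Lemma~\ref{lem:leaf-deletion-lifting} it suffices to treat the smallest such tree, with leaves \(u_1,u_2\) on \(x\) and \(w_1,w_2\) on \(y\).
\begin{itemize}
\item[(i)] Delete at \(x\). This produces the triangle \(u_1u_2v\), together with the edges \(vy\), \(yw_1\), \(yw_2\).
\item[(ii)] Delete at \(v\). Since \(N(v)=\{u_1,u_2,y\}\), the edge \(u_1u_2\) disappears and the edges \(u_1y,u_2y\) appear.
\item[(iii)] The result is \(K_{1,4}\) centred at \(y\). The argument of condition 1 then gives three isolated vertices.
\end{itemize}
The lifting principle handles the distinguished sets.

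\emph{Condition 3.} Here all degrees lie in \(\{1\}\cup\{3,4,\dots\}\), and deleting any leaf yields a member of \(\mathcal E\). I would show that this forces \(T\cong K_{1,4}\), which is covered by condition 1; note that \(K_{1,3}\in\mathcal E\) is excluded.

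The argument is a short case analysis. If some vertex has degree \(\ge5\), deleting a leaf leaves a vertex of degree \(\ge4\), which is impossible in \(\mathcal E\). If \(T\) has a vertex of degree \(4\) and \(T\) is not a star, take a leaf hanging off another branch vertex. Deleting it keeps the degree-\(4\) vertex, again impossible.

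If all branch vertices have degree \(3\), then \(T\) has at least three of them, since one or two adjacent ones would put \(T\in\mathcal E\). Pick a leaf adjacent to a branch vertex that is not an end of the branch-vertex subtree. Deleting it produces at least two vertices of degree \(3\) that are non-adjacent, or at least three such vertices, so the result is not in \(\mathcal E\).

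This enumeration is the step I expect to need the most care, particularly the choice of leaf in the all-degree-\(3\) case. I would argue via a longest path in the subtree induced on the branch vertices, taking a leaf adjacent to an interior vertex of that path, or, if the path has length one, to its end.
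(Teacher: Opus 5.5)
Your argument is correct, but it is organised differently from the paper's.

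\textbf{How the paper proceeds.}
\begin{itemize}
\item It proves the lifting lemma only for leaf deletions.
\item For condition 2 it gives an explicit certificate on the reduced core: \(L_{T_1}=\{v_2,v_5\}\) with the sequence \(\lcom v_2\lcom v_5\lcom v_1\lcom v_4\).
\item For condition 3 it fixes a single leaf \(v_i\) and case-splits on which member of \(\mathcal E\) the tree \(T-v_i\) is, and on where the extra leaf is attached. The degree-\(4\) subcases are sent to \(K_{1,4}\). The remaining subcase is handled by \(L_{T_2}=\{v_2,v_4,v_5,v_6\}\) with a five-step sequence.
\end{itemize}

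\textbf{How your route differs.}
\begin{itemize}
\item Your lifting identity \((G,\{x\}\cup(L'\triangle N_G(x)))\lcom x=(G\lcom x,L')\) holds for arbitrary \(x\). It removes all bookkeeping of distinguished sets, so everything becomes a statement about underlying graphs.
\item The reduction \(T_1\lcom x\lcom v\cong K_{1,4}\) then funnels condition 2 into condition 1.
\item By using that \emph{every} leaf deletion lands in \(\mathcal E\), you show that condition 3 forces \(T\cong K_{1,4}\).
\end{itemize}
This is sharper than the paper's analysis. The paper's \(T_2\) subcase and its ``degree-\(4\) vertex inside an H-shape'' subcase are in fact vacuous under condition 3. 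For \(T_2\), deleting the leaf at the middle branch vertex leaves two non-adjacent degree-\(3\) vertices. For the degree-\(4\) configuration, deleting a leaf at the degree-\(3\) vertex keeps a degree-\(4\) vertex. In return, the paper's route gives certificates that can be checked mechanically, and its condition-3 analysis uses only one leaf.

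\textbf{One step needs a small repair.} In the all-degree-\(3\) case, your selection rule can be empty. A vertex of the branch-vertex subtree \(S\) that is not an end of \(S\) need not have any leaf neighbour. The same holds for an interior vertex of a longest path in \(S\). For example, when \(S\cong K_{1,3}\), its centre has no leaf neighbour.

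The rule is unnecessary anyway. For every leaf \(\ell\), the tree \(T-\ell\) has exactly \(|S|-1\) vertices of degree \(3\).
\begin{itemize}
\item If \(|S|\ge 4\), any leaf works, since \(T-\ell\) then has at least three degree-\(3\) vertices.
\item If \(|S|=3\), then \(S\) is a path whose middle vertex has exactly one leaf neighbour. Deleting that leaf leaves the two non-adjacent ends of \(S\) with degree \(3\), so \(T-\ell\notin\mathcal E\).
\end{itemize}
Your ``path of length one'' case cannot occur, since \(|S|\ge 3\).
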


\begin{proof}
We use Lemma~\ref{lem:leaf-deletion-lifting} above whenever pendant branches are shortened by local complementation deletions at leaves.
Let \(V(T)=\{v_1,\ldots,v_n\}.\)

\medskip
\textbf{Case 1.} Suppose that \(T \cong K_{1,n-1}\), where \(n \ge 5\). Let \(v_1\) be the central vertex of \(T\), and let
\[N_T(v_1)=\{v_2,\ldots,v_n\}.\]
Set \(L_T=\{v_1\}\). Then
\[(T,L_T) \lcom v_1 \lcom v_2=(T',\emptyset) \]
where \(T'=T\lcom v_1 \lcom v_2\) is \(\overline{K_{n-2}}\) and contains at least three isolated vertices, none of which belongs to the final subset.

\medskip
\textbf{Case 2.} Suppose that \(T\) contains a vertex \(v_i\) of degree \(2\) such that
\(T\lcom v_i\in\mathcal E\). Let
\[
    N_T(v_i)=\{v_j,v_k\}.
\]
Since \(v_i\) has degree \(2\), the underlying graph \(T\lcom v_i\) is
obtained from \(T\) by suppressing \(v_i\); equivalently, \(T\) is obtained
from \(T\lcom v_i\) by subdividing the edge \(v_jv_k\).

If \(T\lcom v_i\cong P_{n-1}\), then \(T\cong P_n\), contradicting
\(T\notin\mathcal E\).

If \(T\lcom v_i\) is a T-shape tree, then \(T\) is also a T-shape tree,
since subdividing an edge of a T-shape tree only increases the length of one
of its three branches. This contradicts \(T\notin\mathcal E\).

If \(T\lcom v_i\) is an H-shape tree, then \(T\) is obtained from an
H-shape tree by subdividing one of its edges. If the subdivided edge lies on
one of the four pendant branches, then \(T\) is again an H-shape tree,
contrary to \(T\notin\mathcal E\). Hence the subdivided edge must be the
central edge between the two degree-\(3\) vertices.

By Lemma~\ref{lem:leaf-deletion-lifting}, we shorten the four pendant branches to length \(1\) by local complementation deletions at leaves. After relabelling the reduced core as in
Figure~\ref{fig:T_1}, we obtain the tree \(T_1\). 
For this reduced core, set
\[
    L_{T_1}=\{v_2,v_5\}.
\]
Then
\[
    (T_1,L_{T_1})\lcom v_2\lcom v_5\lcom v_1\lcom v_4
    =
    (T_1',\emptyset),
\]
where \(T_1'\) has three isolated vertices, none of which belongs to the final subset. By Lemma~\ref{lem:leaf-deletion-lifting}, this local complementation minor lifts to one for the original tree \(T\).

\medskip
\textbf{Case 3.} Suppose that \(T\) contains no vertex of degree \(2\), and that for
every leaf \(v_i\) of \(T\), one has
\[
    T\lcom v_i=T-v_i\in\mathcal E .
\]
Let \(v_i\) be a leaf of \(T\), and let \(v_j\) be its unique neighbour.
Since \(v_i\) is a leaf, we have
\[
    T\lcom v_i=T-v_i .
\]
We distinguish the three possibilities for \(T-v_i\).

If \(T-v_i\cong P_{n-1}\), then \(T\) is obtained from a path by attaching
one pendant vertex. Hence \(T\) is either a path or a T-shape tree, contrary
to \(T\notin\mathcal E\).

If \(T-v_i\) is a T-shape tree, then \(T\) is obtained from a T-shape tree
by attaching one pendant vertex. If the attachment is on one of its
three branches, then \(T\) is an H-shape tree, contrary to
\(T\notin\mathcal E\). If the attachment is at the central vertex, then
the central vertex has degree \(4\). By reducing the four branches incident
with this vertex to length \(1\), we obtain a star \(K_{1,4}\) as a
local complementation minor. Hence the conclusion follows from the
star case proved above.

It remains to consider the case where \(T-v_i\) is an H-shape tree. Then
\(T\) is obtained from an H-shape tree by attaching one pendant vertex.

If the attachment is at one of the two degree-\(3\) vertices of
\(T-v_i\), then that vertex becomes a vertex of degree \(4\) in \(T\).
Deleting the extra pendant leaves on the other side, if necessary, and then
reducing the four branches incident with this degree \(4\) vertex to length
\(1\), we again obtain a star \(K_{1,4}\) as a local complementation 
minor. Thus the conclusion follows from the star case.

We are left with the case where the attachment is on one of the four
pendant branches of the H-shape tree. Since \(T\) contains no vertex of
degree \(2\), by Lemma~\ref{lem:leaf-deletion-lifting}, it is enough to consider the reduced core shown in Figure~\ref{fig:T_2}.

For the reduced core in Figure~\ref{fig:T_2}, set \(L_{T_2}=\{v_2,v_4,v_5,v_6\}.\) Then
\[
    (T_2,L_{T_2})\lcom v_2\lcom v_6\lcom v_1\lcom v_7\lcom v_4
    =
    (T_2',\emptyset),
\]
where \(T_2'\) has three isolated vertices, none of which belongs to the final subset.

By Lemma~\ref{lem:leaf-deletion-lifting}, the local complementation minor obtained from the reduced core lifts to one for the original tree \(T\). This proves the claim in the final case.

\begin{figure}[H]
\centering
\begin{minipage}{0.45\textwidth}
    \centering
    \begin{tikzpicture}[every node/.style={circle, fill, inner sep=2pt}]
        \node (v4) at (0,0) [label=below:$v_4$] {};
        
        \node (v2) at (-1.5,0) [label=below right:$v_2$] {};
        \node (v5) at (1.5,0) [label=below left:$v_5$] {};
        
        \node (v1) at (-2.3,1) [label=above:$v_1$] {};
        \node (v3) at (-2.3,-1) [label=below:$v_3$] {};
        
        \node (v6) at (2.3,1) [label=above:$v_6$] {};
        \node (v7) at (2.3,-1) [label=below:$v_7$] {};
        
        \draw (v4) -- (v2);
        \draw (v4) -- (v5);
        \draw (v2) -- (v1);
        \draw (v2) -- (v3);
        \draw (v5) -- (v6);
        \draw (v5) -- (v7);
    \end{tikzpicture}
    \caption{$T_1$}
    \label{fig:T_1}
\end{minipage}
\hfill
\begin{minipage}{0.45\textwidth}
    \centering
    \begin{forest}
      for tree={
        circle, fill, inner sep=2pt,
        l=1cm, l sep=0.8cm,
        parent anchor=center,
        child anchor=center,
        s sep=0.6cm
      }
      [ , label=right:$v_1$
        [ , label=right:$v_2$
          [ , label=right:$v_3$]
          [ , label=right:$v_4$
            [ , label=right:$v_5$]
            [ , label=right:$v_6$
              [ , label=right:$v_7$]
              [ , label=right:$v_8$]
            ]
          ]
        ]
      ]
    \end{forest}
    \caption{$T_2$}
    \label{fig:T_2}
\end{minipage}
\end{figure}
\end{proof}

\begin{proof}[Proof of Theorem \ref{th:main}]

We argue by strong induction on the number of vertices \(n=|V(T)|\).
If \(n\le 4\), then every tree on \(n\) vertices belongs to \(\mathcal E\).
Hence there is no tree \(T\notin\mathcal E\) to consider.

Assume now that \(n\ge 5\), and suppose that the theorem holds for every tree with fewer than \(n\) vertices that does not belong to \(\mathcal E\). Let \(T\) be a tree on \(n\) vertices with \(T\notin\mathcal E\).

If \(T\cong K_{1,n-1}\), then the conclusion follows from
Lemma~\ref{lem:exceptional case}. Hence we may assume that
\(T\not\cong K_{1,n-1}\).

Suppose first that \(T\) contains a vertex of degree \(2\). Let \(v_1\) be
such a vertex, and let
\[
    N_T(v_1)=\{v_2,v_3\}.
\]
Since \(v_1\) has degree \(2\), the local complementation deletion at \(v_1\)
suppresses \(v_1\); equivalently, \(T\lcom v_1\) is the tree obtained from
\(T\) by replacing the path \(v_2v_1v_3\) with the edge \(v_2v_3\). Thus
\(T\lcom v_1\) has \(n-1\) vertices.

If \(T\lcom v_1\notin\mathcal E\), then the induction hypothesis applies to
\(T\lcom v_1\). Thus there exists \(L_1\subseteq V(T\lcom v_1)\) such that
\((T\lcom v_1,L_1)\) contains a local complementation minor
\((T',L_{T'})\), where \(T'\) has at least three isolated vertices, none of which belongs to \(L_{T'}\). Hence the desired conclusion holds for \(T\).

If \(T\lcom v_1\in\mathcal E\), then the conclusion follows from
Lemma~\ref{lem:exceptional case}.

We may therefore assume that \(T\) contains no vertex of degree \(2\).

Since local complementation deletion at a leaf agrees with ordinary deletion, suppose that there exists a leaf \(v\) such that
\[
    T\lcom v=T-v \notin \mathcal{E}.
\]
Then the induction hypothesis applies to \(T-v\). Hence there exists
\(L_1\subseteq V(T-v)\) such that \((T-v,L_1)\) contains a local
complementation minor \((T',L_{T'})\), where \(T'\) has at least three
isolated vertices, none of which belongs to \(L_{T'}\). By Lemma~\ref{lem:leaf-deletion-lifting}, the conclusion also holds for \(T\).

It remains to consider the case where \(T\lcom v=T-v\in\mathcal{E}\) for every leaf \(v\) of \(T\). Thus \(T\) satisfies the third condition in Lemma~\ref{lem:exceptional case}, and the conclusion follows.

This completes the induction, and hence the theorem follows.

\end{proof}

\begin{lemma}\label{lem:jacobsthal_id}
Let \(\{J_k\}_{k\geq0}\) be the Jacobsthal sequence defined by
\[
J_0=0,\qquad J_1=1,\qquad
J_k=J_{k-1}+2J_{k-2}\quad(k\geq2).
\]
Equivalently,
\[
J_k=\frac{2^k-(-1)^k}{3}\quad(k\geq0).
\]
For any integers \(x,y\geq0\), the following identities hold:
\begin{enumerate}
    \item[(i)] \(J_{x+y+1}=J_{x+1}J_{y+1}+2J_xJ_y;\)

    \item[(ii)] \(2(J_xJ_{y+1}+J_{x+1}J_y)=J_{x+y+2}-J_{x+1}J_{y+1}.\)
   
\end{enumerate}
\end{lemma}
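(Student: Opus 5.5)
The plan is to prove both identities directly from the Binet-type closed form \(J_k=\frac{2^k-(-1)^k}{3}\), clearing the denominator \(9\) and comparing monomials in \(2^x,2^y,(-1)^x,(-1)^y\). An alternative is induction on \(y\) using \(J_k=J_{k-1}+2J_{k-2}\), which should also work. The closed form is cleaner because both sides become short sums of products of powers.

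For (i), I would expand \(9J_{x+1}J_{y+1}=(2^{x+1}-(-1)^{x+1})(2^{y+1}-(-1)^{y+1})\) and \(18J_xJ_y=2(2^x-(-1)^x)(2^y-(-1)^y)\). Write \(P=2^{x+y}\), \(Q=(-1)^{x+y}\), and let \(R=2^x(-1)^y+2^y(-1)^x\) be the mixed terms. The first product gives \(4P+Q+2R\) and the second gives \(2P+2Q-2R\). Their sum is \(6P+3Q\). Dividing by \(9\) gives \(\frac{2\cdot 2^{x+y}+(-1)^{x+y}}{3}=\frac{2^{x+y+1}-(-1)^{x+y+1}}{3}=J_{x+y+1}\), as required.

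For (ii), I would use the same bookkeeping. Expanding \(9(J_xJ_{y+1}+J_{x+1}J_y)\) gives \(2P+2P-2Q-2Q+\) mixed terms, and the mixed terms should cancel. Then I check that \(9(J_{x+y+2}-J_{x+1}J_{y+1})=3(4P-Q)-(4P+Q+2R)\). This leaves a residual \(-2R\), so I must recheck the mixed terms on the left.

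The left side's mixed terms come out as \(2^x(-1)^y+2^y(-1)^x-2^x(-1)^y-2^y(-1)^x\), which is not obviously \(-2R\) after doubling. So either a sign in my sketch is off or the identity needs care. The main obstacle is this careful sign bookkeeping in (ii). If the closed-form check shows a mismatch, I will test small cases, e.g. \(x=y=0\): the left side is \(2(0+0)=0\) and the right side is \(J_2-J_1^2=0\), which agrees. I will then redo the expansion term by term. As a fallback I would derive (ii) from (i) and the recurrence: use \(J_{x+y+2}=J_{(x+1)+y+1}=J_{x+2}J_{y+1}+2J_{x+1}J_y\) from (i), substitute \(J_{x+2}=J_{x+1}+2J_x\), and subtract \(J_{x+1}J_{y+1}\). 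This gives \(2J_xJ_{y+1}+2J_{x+1}J_y\) directly, avoiding the sign issue entirely. I will present this short derivation as the proof of (ii).
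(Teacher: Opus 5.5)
Your proposal is correct and follows the paper's proof: (i) by expanding the closed form (your bookkeeping $4P+Q+2R$ plus $2P+2Q-2R$ is right), and (ii) by substituting (i) and the recurrence, where the paper applies (i) at $(x,y+1)$ with $J_{y+2}=J_{y+1}+2J_y$ and you use the mirror-image choice $(x+1,y)$ with $J_{x+2}=J_{x+1}+2J_x$. Your abandoned closed-form check of (ii) also works: the mixed terms on the left do not cancel but total $-R$ before doubling, so both sides equal $8P-4Q-2R$ after multiplying by $9$. Still, the derivation from (i) is the cleaner one to present.
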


\begin{proof}
Using the closed form of \(J_k\), we obtain
\begin{align*}
J_{x+1}J_{y+1}+2J_xJ_y
&=
\frac{1}{9}\Bigl[
(2^{x+1}-(-1)^{x+1})
(2^{y+1}-(-1)^{y+1})  \\
&\hspace{3.2cm}
+2(2^x-(-1)^x)(2^y-(-1)^y)
\Bigr] \\
&=
\frac{2^{x+y+1}-(-1)^{x+y+1}}{3}
=
J_{x+y+1}.
\end{align*}

For (ii), applying (i) to \((x,y+1)\) and using
\(J_{y+2}=J_{y+1}+2J_y\), we obtain
\begin{align*}
J_{x+y+2}
&=
J_{x+1}J_{y+2}+2J_xJ_{y+1} \\
&=
J_{x+1}(J_{y+1}+2J_y)+2J_xJ_{y+1} \\
&=
J_{x+1}J_{y+1}
+2(J_xJ_{y+1}+J_{x+1}J_y).
\end{align*}
Rearranging proves (ii).
\end{proof}

\begin{lemma}[\cite{DengJinYan2026}]\label{lem:leaf_reduction}
Let \(G\) be a simple graph containing a leaf \(x\), and let \(y\) be
the unique neighbour of \(x\). Then
\begin{equation}\label{eq:leaf_recurrence}
P^\times_G(z)
=
zP^\times_{G-x}(z)
+
2z^2P^\times_{G-x-y}(z).
\end{equation}
\end{lemma}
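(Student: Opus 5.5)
We prove the recurrence \eqref{eq:leaf_recurrence} directly from Definition~\ref{def:partial-petrial-polynomial}. The plan is to sort the subsets \(L\subseteq V(G)\) by whether \(x\in L\), and in each case to eliminate the row and column of \(x\) by symmetric Gaussian elimination over \(\mathrm{GF}(2)\). This gives a corank identity relating \(\mathbf A_{(G,L)}\) to an adjacency matrix of \(G-x\) or \(G-x-y\). Let \(n=|V(G)|\). Since \(x\) is a leaf, its row in \(\mathbf A_{(G,L)}\) has a \(1\) only in column \(y\), plus the diagonal entry \([x\in L]\).

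\emph{Case \(x\in L\).} The entry at \((x,x)\) is \(1\), so we use it as a pivot. Add row \(x\) to row \(y\), and column \(x\) to column \(y\). This clears the off-diagonal entries in row and column \(x\). It toggles only the \((y,y)\) entry, because the pivot row has support \(\{x,y\}\). The result is the block matrix \(\operatorname{diag}(1,\mathbf A_{(G-x,L')})\), where \(L'=(L\setminus\{x\})\triangle\{y\}\). This is exactly the leaf local complementation deletion used in Lemma~\ref{lem:leaf-deletion-lifting}, and it agrees with Proposition~\ref{prop:minor-corank}. Hence \(\operatorname{corank}\mathbf A_{(G,L)}=\operatorname{corank}\mathbf A_{(G-x,L')}\). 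The map \(L\mapsto L'\) is a bijection from the subsets containing \(x\) onto the subsets of \(V(G-x)\). Therefore these \(L\) contribute
\[
\sum_{L'}z^{\,n-\operatorname{corank}\mathbf A_{(G-x,L')}}=z\,P^\times_{G-x}(z).
\]

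\emph{Case \(x\notin L\).} Now row \(x\) equals \(e_y\), and column \(x\) equals \(e_y^{T}\). Using row \(x\), clear every other entry of column \(y\); using column \(x\), clear every other entry of row \(y\). This includes the \((y,y)\) entry and all entries \((y,w)\) with \(w\in N(y)\). The matrix becomes a direct sum of the \(2\times2\) block \(\begin{pmatrix}0&1\\1&0\end{pmatrix}\) on \(\{x,y\}\) and \(\mathbf A_{(G-x-y,\,L\setminus\{y\})}\). The \(2\times2\) block is invertible, so the corank is unchanged: \(\operatorname{corank}\mathbf A_{(G,L)}=\operatorname{corank}\mathbf A_{(G-x-y,L\setminus\{y\})}\). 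The map \(L\mapsto L\setminus\{y\}\) is exactly \(2\)-to-\(1\) onto the subsets of \(V(G-x-y)\), since \(y\) may or may not lie in \(L\). Hence these \(L\) contribute
\[
2\sum_{L''}z^{\,(n-2)+2-\operatorname{corank}\mathbf A_{(G-x-y,L'')}}=2z^2P^\times_{G-x-y}(z).
\]

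Adding the two cases gives \eqref{eq:leaf_recurrence}. The only step that needs care is the bookkeeping in the first case: one must check that eliminating the pivot toggles exactly the diagonal entry at \(y\) and nothing else. This holds because the pivot row has support \(\{x,y\}\). One should also adopt the convention \(P^\times_{\emptyset}(z)=1\), which covers the case where \(G-x-y\) is empty.
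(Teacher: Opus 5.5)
Your proof is correct. In the case $x\in L$, the two operations together clear $(x,y)$ and $(y,x)$ and toggle $(y,y)$ exactly once. In the case $x\notin L$, the row operations touch only column $y$ and the column operations touch only row $y$, so the block on $V(G)\setminus\{x,y\}$ is untouched. The bijection and the $2$-to-$1$ count then give exactly \eqref{eq:leaf_recurrence}, including the case $G-x-y=\emptyset$ under $P^\times_{\emptyset}(z)=1$.

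The paper gives no proof of this lemma; it imports it from \cite{DengJinYan2026}. Your elimination argument is therefore a self-contained derivation from Definition~\ref{def:partial-petrial-polynomial}. It helps to see how it relates to the tools the paper does use:
\begin{itemize}
\item Your case $x\in L$ is precisely Proposition~\ref{prop:minor-corank} applied to the leaf deletion $(G,L)\lcom x=(G-x,(L\setminus\{x\})\triangle\{y\})$ computed in Lemma~\ref{lem:leaf-deletion-lifting}. That case could simply be cited.
\item In the case $x\notin L$, the subcase $y\in L$ is also covered by that proposition. One checks that $(G,L)\lcom y\lcom x=(G-x-y,L\setminus\{y\})$: the second local complementation undoes the toggling on $N_G(y)\setminus\{x\}$ created by the first.
\item The subcase $x,y\notin L$ is not directly reachable that way, because local complementation deletion is only permitted at vertices of $L$.
\end{itemize}
Your symmetric elimination is in effect a pivot on the edge $xy$. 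It handles both subcases of $x\notin L$ at once and makes the factor $2$ transparent: once row $x$ equals $e_y^{T}$, the diagonal entry at $y$ can be cleared at no cost, so whether $y\in L$ does not affect the corank.
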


\begin{lemma}\label{lem:leaf-jacobsthal-recurrence}
Let \(G_0\) be a simple graph and let \(v\in V(G_0)\). Put
\(G_{-1}=G_0-v\). For \(r\geq1\), let \(G_r\) be obtained from
\(G_0\) by identifying one endpoint of a path of length \(r\) with
\(v\). Then,
for every \(r\geq0\),
\[
P^\times_{G_r}(z)
=
z^r\left(
J_{r+1}P^\times_{G_0}(z)
+
2J_rzP^\times_{G_{-1}}(z)
\right),
\]
where \(J_r\) is the \(r\)th Jacobsthal number.
\end{lemma}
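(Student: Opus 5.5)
We prove the identity by induction on \(r\), using the leaf recurrence of Lemma~\ref{lem:leaf_reduction} together with the Jacobsthal recurrence \(J_{k}=J_{k-1}+2J_{k-2}\).

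\emph{Base cases.} For \(r=0\), we have \(J_1=1\) and \(J_0=0\), so the right-hand side reduces to \(P^\times_{G_0}(z)\), as required. For \(r=1\), the graph \(G_1\) is \(G_0\) with a pendant vertex \(x\) attached at \(v\). Lemma~\ref{lem:leaf_reduction}, applied with \(y=v\), gives
\[
P^\times_{G_1}(z)=zP^\times_{G_0}(z)+2z^2P^\times_{G_{-1}}(z),
\]
since \(G_1-x=G_0\) and \(G_1-x-v=G_0-v=G_{-1}\). Because \(J_2=J_1=1\), this agrees with \(z\bigl(J_2P^\times_{G_0}(z)+2J_1zP^\times_{G_{-1}}(z)\bigr)\).

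\emph{Inductive step.} Let \(r\ge2\). The free endpoint \(x\) of the attached path is a leaf of \(G_r\). Its neighbour \(y\) is an internal vertex of the path, because \(r\ge2\). Removing \(x\) gives \(G_r-x=G_{r-1}\), and removing both \(x\) and \(y\) gives \(G_r-x-y=G_{r-2}\). For \(r=2\) this means \(y\) is the path vertex adjacent to \(v\), and \(G_2-x-y=G_0\), which is consistent with the indexing. Lemma~\ref{lem:leaf_reduction} then yields
\[
P^\times_{G_r}(z)=zP^\times_{G_{r-1}}(z)+2z^2P^\times_{G_{r-2}}(z).
\]
We substitute the inductive formulas for \(r-1\) and \(r-2\), both of which are \(\ge0\). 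Both terms carry the factor \(z^{r}\): the first is \(z\cdot z^{r-1}\) and the second is \(z^2\cdot z^{r-2}\). Collecting terms, the coefficient of \(P^\times_{G_0}(z)\) is \(J_r+2J_{r-1}=J_{r+1}\). The coefficient of \(2zP^\times_{G_{-1}}(z)\) is \(J_{r-1}+2J_{r-2}=J_r\). Both simplifications use the Jacobsthal recurrence, which is valid since \(r\ge2\).

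\emph{Expected difficulty.} The main point to check is that the graphs fit the recurrence pattern, namely that \(G_r-x-y\) is exactly \(G_{r-2}\) with the convention \(G_{-1}=G_0-v\). This is only relevant at the boundary case \(r=1\), which we treat as a base case because there \(y=v\) is not a path vertex. Everything else is direct algebra.
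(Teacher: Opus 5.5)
Your proof is correct and follows essentially the same route as the paper: both obtain $P^\times_{G_r}(z)=zP^\times_{G_{r-1}}(z)+2z^2P^\times_{G_{r-2}}(z)$ from Lemma~\ref{lem:leaf_reduction}, using the convention $G_{-1}=G_0-v$. The paper then solves this recurrence via the characteristic roots $2z$ and $-z$, whereas you verify the closed form by induction using $J_k=J_{k-1}+2J_{k-2}$, which spells out the constant-fitting step that the paper leaves implicit.
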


\begin{proof}
Write \(X_r=P^\times_{G_r}(z)\). Applying
Lemma~\ref{lem:leaf_reduction} successively to the terminal leaves of
the attached path gives
\[
X_r=zX_{r-1}+2z^2X_{r-2}
\qquad (r\geq1),
\]
with initial terms \(X_{-1}=P^\times_{G_{-1}}(z)\) and
\(X_0=P^\times_{G_0}(z)\).

The characteristic equation is
\[
\lambda^2-z\lambda-2z^2=0,
\]
with roots \(2z\) and \(-z\). Determining the constants from
\(X_{-1}\) and \(X_0\) gives
\[
X_r
=
J_{r+1}z^rX_0
+
2J_rz^{r+1}X_{-1}.
\]
Substituting back \(X_r=P^\times_{G_r}(z)\) proves the result.
\end{proof}

\begin{remark}[\cite{DengJinYan2026}]
For a disconnected simple graph \(G=G_1\cup G_2\), the vertex subsets
in \(G_1\) and \(G_2\) are chosen independently. Hence
\[
P^\times_{G_1\cup G_2}(z)
=
P^\times_{G_1}(z)P^\times_{G_2}(z).
\]
\end{remark}

\begin{theorem}\label{thm:T-shape_explicit}
Let \(T(a,b,c)\) be the T-shape tree with \(a,b,c\ge 1\), consisting of a
central vertex together with three pendant paths of lengths \(a\), \(b\), and
\(c\). Put \(n=a+b+c+1\), where \(J_k\) is as in Lemma~\ref{lem:jacobsthal_id}.

Then
\[
    P^\times_{T(a,b,c)}(z)=Az^n+Bz^{n-1}+Cz^{n-2},
\]
where
\begin{align*}
    A&=J_{a+b+2}J_{c+1}+2J_{a+1}J_{b+1}J_c,\\
    B&=J_{a+b+1}J_{c+1}+2\bigl(J_aJ_{b+1}+J_{a+1}J_b\bigr)J_c,\\
    C&=2J_aJ_bJ_c.
\end{align*}
\end{theorem}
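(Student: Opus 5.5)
The plan is to apply Lemma~\ref{lem:leaf-jacobsthal-recurrence} with the branch of length \(c\) as the attached path. Let \(G_0\) be the path \(P_{a+b+1}\) formed by the branches of lengths \(a\) and \(b\) together with the central vertex \(v\). Then \(G_c=T(a,b,c)\). Deleting \(v\) gives \(G_{-1}=P_a\cup P_b\). Lemma~\ref{lem:leaf-jacobsthal-recurrence} yields
\[
P^\times_{T(a,b,c)}(z)=z^c\Bigl(J_{c+1}P^\times_{P_{a+b+1}}(z)+2J_c\,z\,P^\times_{P_a}(z)P^\times_{P_b}(z)\Bigr),
\]
where the product form uses the Remark on disconnected graphs.

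Next we rewrite Lemma~\ref{lem:path-formula} in Jacobsthal form. For every \(m\ge1\), both parity cases read
\[
P^\times_{P_m}(z)=J_m z^{m-1}+J_{m+1}z^m,
\]
because \((2^m-(-1)^m)/3=J_m\) and \((2^{m+1}+(-1)^m)/3=J_{m+1}\). This formula also holds for \(m=0\), with \(P^\times_{P_0}=1=J_1\), although that case is not needed here since \(a,b\ge1\). Substituting \(m=a+b+1\), \(a\), and \(b\) gives
\[
z^c J_{c+1}\bigl(J_{a+b+1}z^{a+b}+J_{a+b+2}z^{a+b+1}\bigr)
+2J_c z^{c+1}\bigl(J_az^{a-1}+J_{a+1}z^a\bigr)\bigl(J_bz^{b-1}+J_{b+1}z^b\bigr).
\]

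Now we collect powers of \(z\), with \(n=a+b+c+1\).
\begin{itemize}
\item The coefficient of \(z^n\) is \(J_{c+1}J_{a+b+2}+2J_cJ_{a+1}J_{b+1}=A\).
\item The coefficient of \(z^{n-1}\) is \(J_{c+1}J_{a+b+1}+2J_c(J_aJ_{b+1}+J_{a+1}J_b)=B\).
\item The coefficient of \(z^{n-2}\) is \(2J_cJ_aJ_b=C\).
\end{itemize}
This matches the statement directly, so identities (i) and (ii) of Lemma~\ref{lem:jacobsthal_id} are not even required. They would only be useful for rewriting \(B\) in an alternative form, for instance via (ii): \(2(J_aJ_{b+1}+J_{a+1}J_b)=J_{a+b+2}-J_{a+1}J_{b+1}\).

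The only real point to check is the unified Jacobsthal form of the path polynomial across both parities, including small cases such as \(m=1\), where \(P^\times_{P_1}=z+1\), which equals \(J_1+J_2z\). A second check is that the branching vertex \(v\) is the correct choice for the attachment point in Lemma~\ref{lem:leaf-jacobsthal-recurrence}, so that \(G_{-1}\) splits into the two paths \(P_a\) and \(P_b\). I expect no genuine obstacle beyond bookkeeping of exponents.
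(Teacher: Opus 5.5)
Your proposal is correct and follows essentially the same route as the paper. Both apply Lemma~\ref{lem:leaf-jacobsthal-recurrence} with \(G_0=P_{a+b+1}\) and the attachment at the central vertex, so that \(G_{-1}=P_a\cup P_b\); both then use multiplicativity over disjoint unions and the Jacobsthal form of the path polynomial, and collect coefficients. Your explicit check that both parity cases of Lemma~\ref{lem:path-formula} reduce to \(J_mz^{m-1}+J_{m+1}z^m\) is a step the paper uses without comment, and you are right that identities (i) and (ii) are not needed here.
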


\begin{proof}
The tree \(T(a,b,c)\) is obtained from the path \(P_{a+b+1}\) by attaching a pendant path of length \(c\) to the vertex at distance \(a\) from one end of the path. We vary the length of this attached path. For \(r\ge 1\), let \(T(a,b,r)\) denote the corresponding T-shape tree with attached path of length \(r\). Set
\[
    T(a,b,0):=P_{a+b+1},
    \qquad
    T(a,b,-1):=P_a\cup P_b.
\]
The graph \(T(a,b,-1)\) is obtained from \(T(a,b,1)\) by deleting the leaf on the third branch together with its unique neighbour, namely the central vertex.

For every \(r\ge 1\), let \(x_r\) be the leaf at the end of the third branch
of \(T(a,b,r)\), and let \(y_r\) be its unique neighbour. Then
\[
    T(a,b,r)-x_r=T(a,b,r-1),
    \qquad
    T(a,b,r)-\{x_r,y_r\}=T(a,b,r-2).
\]
Hence Lemma~\ref{lem:leaf-jacobsthal-recurrence} gives
\[
P^\times_{T(a,b,c)}(z)=z^c\left(
J_{c+1}P^\times_{T(a,b,0)}(z)+2J_czP^\times_{T(a,b,-1)}(z)
\right).
\]

We now compute \(P^\times_{T(a,b,0)}(z)\) and \(P^\times_{T(a,b,-1)}(z)\). Since \(T(a,b,0)\cong P_{a+b+1},\) Lemma~\ref{lem:path-formula} gives
\[
P^\times_{T(a,b,0)}(z)
=J_{a+b+2}z^{a+b+1}+J_{a+b+1}z^{a+b}.
\]
On the other hand, by multiplicativity over disjoint unions,
\[
\begin{aligned}
P^\times_{T(a,b,-1)}(z)&=P^\times_{P_a\cup P_b}(z)\\
&=P^\times_{P_a}(z)P^\times_{P_b}(z)\\
&=\bigl(J_{a+1}z^a+J_az^{a-1}\bigr)\bigl(J_{b+1}z^b+J_bz^{b-1}\bigr)\\
&=J_{a+1}J_{b+1}z^{a+b}+\bigl(J_aJ_{b+1}+J_{a+1}J_b\bigr)z^{a+b-1}
+J_aJ_bz^{a+b-2}.
\end{aligned}
\]

Substituting these expressions into the Jacobsthal formula yields
\[
\begin{aligned}
P^\times_{T(a,b,c)}(z)
&=
z^cJ_{c+1}
\bigl(J_{a+b+2}z^{a+b+1}+J_{a+b+1}z^{a+b}
\bigr)\\
&\quad
+2J_cz^{c+1}
\bigl(J_{a+1}J_{b+1}z^{a+b}
+\bigl(J_aJ_{b+1}+J_{a+1}J_b\bigr)z^{a+b-1}+J_aJ_bz^{a+b-2}
\bigr)\\
&=
\bigl(J_{a+b+2}J_{c+1}+2J_{a+1}J_{b+1}J_c
\bigr)z^{a+b+c+1}\\
&\quad
+\bigl(J_{a+b+1}J_{c+1}+2\bigl(J_aJ_{b+1}+J_{a+1}J_b\bigr)J_c
\bigr)z^{a+b+c}\\
&\quad
+2J_aJ_bJ_cz^{a+b+c-1}.
\end{aligned}
\]
Since \(n=a+b+c+1\), this is precisely
\[
    P^\times_{T(a,b,c)}(z)
    =
    Az^n+Bz^{n-1}+Cz^{n-2},
\]
with
\[
A=J_{a+b+2}J_{c+1}+2J_{a+1}J_{b+1}J_c,
\]
\[
B=J_{a+b+1}J_{c+1}+2\bigl(J_aJ_{b+1}+J_{a+1}J_b\bigr)J_c,
\]
and
\[
C=2J_aJ_bJ_c.
\]
The proof is complete.
\end{proof}

\begin{theorem}\label{thm:H-shape_trinomial}
Let \(H(a,b,c,d)\) be the H-shape tree with \(a,b,c,d\ge 1\), having two
adjacent degree-\(3\) vertices \(u\) and \(v\), where the two pendant paths
incident with \(u\) have lengths \(a\) and \(b\), and the two pendant paths
incident with \(v\) have lengths \(c\) and \(d\). Put \(n=a+b+c+d+2\), where \(J_k\) is as in Lemma~\ref{lem:jacobsthal_id}.

Then
\[
    P^\times_{H(a,b,c,d)}(z)=Az^n+Bz^{n-1}+Cz^{n-2},
\]
where
\begin{align*}
    A&=J_{a+b+2}J_{c+d+2}+2J_{a+1}J_{b+1}J_{c+1}J_{d+1},\\
    B&=J_{a+b+1}J_{c+d+2}+J_{a+b+2}J_{c+d+1}-J_{a+1}J_{b+1}J_{c+1}J_{d+1},\\
    C&=J_{a+b+1}J_{c+d+1}-J_{a+1}J_{b+1}J_{c+1}J_{d+1}.
\end{align*}
\end{theorem}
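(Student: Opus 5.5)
We obtain \(H(a,b,c,d)\) from a T-shape tree by attaching one pendant path, and then apply Lemma~\ref{lem:leaf-jacobsthal-recurrence} as in the proof of Theorem~\ref{thm:T-shape_explicit}. Let \(G_0\) consist of \(u\) with its two pendant paths of lengths \(a\) and \(b\), together with the path from \(u\) through \(v\) to the end of the branch of length \(c\). Thus \(G_0\cong T(a,b,c+1)\), in which \(v\) is the neighbour of the centre \(u\) on the third branch. Attaching a path of length \(d\) at \(v\) gives \(G_d=H(a,b,c,d)\). Deleting \(v\) instead gives
\[
G_{-1}=G_0-v\cong P_{a+b+1}\cup P_c .
\]
Lemma~\ref{lem:leaf-jacobsthal-recurrence} therefore yields
\[
P^\times_{H(a,b,c,d)}(z)=z^d\Bigl(J_{d+1}P^\times_{T(a,b,c+1)}(z)+2J_dz\,P^\times_{P_{a+b+1}}(z)P^\times_{P_c}(z)\Bigr).
\]
Here we use multiplicativity over disjoint unions. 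Note also that the formula is valid for \(d\ge1\), as required.

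Next we insert the known pieces. Theorem~\ref{thm:T-shape_explicit} with \(c\) replaced by \(c+1\) gives \(P^\times_{T(a,b,c+1)}\) in degrees \(a+b+c+2\), \(a+b+c+1\) and \(a+b+c\). By Lemma~\ref{lem:path-formula} in Jacobsthal form, \(P^\times_{P_m}(z)=J_{m+1}z^m+J_mz^{m-1}\). Multiplying out shows that the result is supported on \(z^n,z^{n-1},z^{n-2}\) with \(n=a+b+c+d+2\), with raw coefficients
\begin{align*}
A'&=J_{d+1}\bigl(J_{a+b+2}J_{c+2}+2J_{a+1}J_{b+1}J_{c+1}\bigr)+2J_dJ_{a+b+2}J_{c+1},\\
B'&=J_{d+1}\bigl(J_{a+b+1}J_{c+2}+2(J_aJ_{b+1}+J_{a+1}J_b)J_{c+1}\bigr)+2J_d\bigl(J_{a+b+2}J_c+J_{a+b+1}J_{c+1}\bigr),\\
C'&=2J_{d+1}J_aJ_bJ_{c+1}+2J_dJ_{a+b+1}J_c .
\end{align*}

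The remaining work, and the main obstacle, is to rewrite these raw coefficients in the symmetric form stated in the theorem. We will do this using only Lemma~\ref{lem:jacobsthal_id}.
\begin{itemize}
\item[\(A\):] Apply (i) with \((x,y)=(c+1,d)\), namely \(J_{c+d+2}=J_{c+2}J_{d+1}+2J_{c+1}J_d\). The two \(J_{a+b+2}\) terms then combine to \(J_{a+b+2}J_{c+d+2}\), and the remaining term is \(2J_{a+1}J_{b+1}J_{c+1}J_{d+1}\).
\item[\(B\):] Replace \(2(J_aJ_{b+1}+J_{a+1}J_b)\) by \(J_{a+b+2}-J_{a+1}J_{b+1}\) using (ii). This produces the negative term \(-J_{a+1}J_{b+1}J_{c+1}J_{d+1}\). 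Then use (i) with \((x,y)=(c+1,d)\) and with \((x,y)=(c,d)\) to collect the rest into \(J_{a+b+1}J_{c+d+2}+J_{a+b+2}J_{c+d+1}\).
\item[\(C\):] Use (ii) once more, now in the form \(2J_aJ_b=\tfrac{1}{2}\cdot 2\cdot 2J_aJ_b\), combined with (i). Most likely this will go via the identity
\[
4J_aJ_b=J_{a+b+1}-J_{a+1}J_{b+1}+\bigl(\text{correction}\bigr),
\]
obtained by subtracting (i) for \((a,b)\) from (ii). The cleaner route is probably to verify \(C'=J_{a+b+1}J_{c+d+1}-J_{a+1}J_{b+1}J_{c+1}J_{d+1}\) directly from the closed form \(J_k=(2^k-(-1)^k)/3\).
\end{itemize}

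If the identity-juggling for \(B\) and \(C\) becomes messy, we will fall back on the closed form throughout. Each coefficient is a polynomial in \(2^{a},2^{b},2^{c},2^{d}\) and the corresponding signs, so the check reduces to expanding both sides, exactly as in the proof of Lemma~\ref{lem:jacobsthal_id}(i).

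As a sanity check we will verify that \(A+B+C=2^n\), which must hold because the sum counts all subsets \(L\subseteq V\). We will also test a small case such as \(H(1,1,1,1)\), which has \(n=6\), against a direct computation.
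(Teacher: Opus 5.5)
Your proposal is essentially the paper's proof: the same choice $G_0\cong T(a,b,c+1)$ and $G_{-1}\cong P_{a+b+1}\cup P_c$ in Lemma~\ref{lem:leaf-jacobsthal-recurrence}, the same raw coefficients, and the same applications of Lemma~\ref{lem:jacobsthal_id} for $A$ and $B$. For $C$, your detour through (ii) and a ``correction'' term is muddled and unnecessary: (i) with $(x,y)=(a,b)$ already gives $2J_aJ_b=J_{a+b+1}-J_{a+1}J_{b+1}$, and substituting this into $C'$ and then applying (i) with $(x,y)=(c,d)$ yields $J_{a+b+1}J_{c+d+1}-J_{a+1}J_{b+1}J_{c+1}J_{d+1}$ directly (your closed-form fallback would also work).
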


\begin{proof}
Fix \(a,b,c\), and vary the length of the fourth branch. For \(r\ge 1\),
let \(H(a,b,c,r)\) denote the H-shape tree whose fourth branch has length
\(r\). For this proof, also set
\[
    H(a,b,c,0):=T(a,b,c+1),
    \qquad
    H(a,b,c,-1):=P_c\cup P_{a+b+1}.
\]

For every \(r\ge 1\), let \(x_r\) be the leaf at the end of the fourth branch
of \(H(a,b,c,r)\), and let \(y_r\) be its unique neighbour. Then
\[
    H(a,b,c,r)-x_r=H(a,b,c,r-1),
\]
and
\[
    H(a,b,c,r)-\{x_r,y_r\}=H(a,b,c,r-2).
\]
Applying Lemma~\ref{lem:leaf-jacobsthal-recurrence} and then taking \(r=d\),
we obtain
\[
P^\times_{H(a,b,c,d)}(z)
=
z^d\left(
J_{d+1}P^\times_{H(a,b,c,0)}(z)
+
2J_dzP^\times_{H(a,b,c,-1)}(z)
\right).
\]

It remains to compute \(P^\times_{H(a,b,c,0)}(z)\) and \(P^\times_{H(a,b,c,-1)}(z)\). Since \(H(a,b,c,0)=T(a,b,c+1),\) Theorem~\ref{thm:T-shape_explicit} gives
\[
\begin{aligned}
P^\times_{H(a,b,c,0)}(z)
&=\bigl(J_{a+b+2}J_{c+2}
+2J_{a+1}J_{b+1}J_{c+1}\bigr)z^{a+b+c+2}\\
&\quad
+\bigl(J_{a+b+1}J_{c+2}
+2\bigl(J_aJ_{b+1}+J_{a+1}J_b\bigr)J_{c+1}\bigr)z^{a+b+c+1}\\
&\quad
+2J_aJ_bJ_{c+1}z^{a+b+c}.
\end{aligned}
\]
On the other hand, by multiplicativity over disjoint unions,
\[
\begin{aligned}
P^\times_{H(a,b,c,-1)}(z)
&=P^\times_{P_c\cup P_{a+b+1}}(z)\\
&=P^\times_{P_c}(z)P^\times_{P_{a+b+1}}(z)\\
&=\bigl(J_{c+1}z^c+J_cz^{c-1}\bigr)
  \bigl(J_{a+b+2}z^{a+b+1}+J_{a+b+1}z^{a+b}\bigr)\\
&=J_{c+1}J_{a+b+2}z^{a+b+c+1}\\
&\quad
+\bigl(J_cJ_{a+b+2}+J_{c+1}J_{a+b+1}\bigr)z^{a+b+c}\\
&\quad
+J_cJ_{a+b+1}z^{a+b+c-1}.
\end{aligned}
\]

Substituting these expressions into the preceding formula and collecting
terms gives
\[
P^\times_{H(a,b,c,d)}(z)=Az^n+Bz^{n-1}+Cz^{n-2},
\]
where \(n=a+b+c+d+2\) and
\[
\begin{aligned}
A
&=J_{d+1}\bigl(J_{a+b+2}J_{c+2}
+2J_{a+1}J_{b+1}J_{c+1}\bigr)
+2J_dJ_{c+1}J_{a+b+2},\\
B
&=J_{d+1}\bigl(J_{a+b+1}J_{c+2}
+2\bigl(J_aJ_{b+1}+J_{a+1}J_b\bigr)J_{c+1}\bigr)\\
&\quad
+2J_d\bigl(J_cJ_{a+b+2}+J_{c+1}J_{a+b+1}\bigr),\\
C
&=2J_{d+1}J_aJ_bJ_{c+1}
+2J_dJ_cJ_{a+b+1}.
\end{aligned}
\]
Using Lemma~\ref{lem:jacobsthal_id} in the forms
\[
    J_{c+d+2}=J_{c+2}J_{d+1}+2J_{c+1}J_d,
    \qquad
    J_{c+d+1}=J_{c+1}J_{d+1}+2J_cJ_d,
\]
\[
    2\bigl(J_aJ_{b+1}+J_{a+1}J_b\bigr)
    =
    J_{a+b+2}-J_{a+1}J_{b+1},
    \qquad
    2J_aJ_b=J_{a+b+1}-J_{a+1}J_{b+1},
\]
these coefficients simplify to
\[
\begin{aligned}
A&=J_{a+b+2}J_{c+d+2}
+2J_{a+1}J_{b+1}J_{c+1}J_{d+1},\\
B&=J_{a+b+1}J_{c+d+2}
+J_{a+b+2}J_{c+d+1}
-J_{a+1}J_{b+1}J_{c+1}J_{d+1},\\
C&=J_{a+b+1}J_{c+d+1}
-J_{a+1}J_{b+1}J_{c+1}J_{d+1}.
\end{aligned}
\]
This gives the asserted formulas for \(A\), \(B\), and \(C\).
\end{proof}

\textbf{Proof of Theorem~\ref{thm:structure}}

\textit{Sufficiency.} Suppose first that \(T\) is a T-shape tree or an H-shape tree. By
Theorems~\ref{thm:T-shape_explicit} and~\ref{thm:H-shape_trinomial},
respectively, \(P_T^\times(z)\) has the form
\[
P_T^\times(z)=Az^n+Bz^{n-1}+Cz^{n-2}.
\]
Since \(J_k>0\) for every \(k\geq1\), the formulas in Theorem~\ref{thm:T-shape_explicit} and the positive-sum expressions obtained in the proof of
Theorem~\ref{thm:H-shape_trinomial} show that \(A,B,C>0\). Hence \(P_T^\times(z)\) is trinomial.

\textit{Necessity.} Conversely, suppose that \(P_T^\times(z)\) is trinomial. Since
\(P_T^\times(z)\) has degree \(n\) and is interpolating, its nonzero terms
must be exactly
\[
z^{n-2},\quad z^{n-1},\quad z^n.
\]
Therefore, by Definition~\ref{def:partial-petrial-polynomial},
\[
\operatorname{corank}(\mathbf A_{(T,L)})\leq 2
\]
for every \(L\subseteq V(T)\).

If \(T\) is a path, then \(P_T^\times(z)\) is binomial by
Lemma~\ref{lem:path-formula}, contradicting the assumption that it is
trinomial. Hence \(T\) is not a path.

Assume now that \(T\) is neither a T-shape tree nor an H-shape tree. By
Theorem~\ref{th:main}, there exists \(L_T\subseteq V(T)\) such
that the graft \((T,L_T)\) contains a local complementation minor
\((T',L_{T'})\), where \(T'\) has at least three isolated vertices, none of
which belongs to \(L_{T'}\). Consequently, \(\mathbf A_{(T',L_{T'})}\) has at least three zero rows, and hence
\[
\operatorname{corank}\mathbf A_{(T',L_{T'})}\geq 3.
\]
By Proposition~\ref{prop:minor-corank},
\[
\operatorname{corank}\mathbf A_{(T,L_T)}
=
\operatorname{corank}\mathbf A_{(T',L_{T'})}
\geq 3,
\]
contradicting \(\operatorname{corank}\mathbf A_{(T,L)}\leq 2\) for all
\(L\subseteq V(T)\). Therefore \(T\) must be a T-shape tree or an H-shape
tree.

\newpage
\bibliographystyle{plain} 
\bibliography{references}

\end{document}